\theoremstyle{definition}
\newtheorem{definition}{Definition}
\theoremstyle{plain}
\newtheorem{theorem}{Theorem}
\newtheorem{lemma}{Lemma}
\newtheorem{question}{Question}
\newtheorem{observation}{Observation}
\newtheorem{example}{Example}
\newtheorem{conjecture}{Conjecture}
\newcommand{\cP}{\mathcal{P}}
\DeclareMathOperator{\press}{Press}
\DeclareMathOperator{\poset}{Poset}
\DeclareMathOperator{\linext}{LE}
\begin{document}

\title{A New Characterization of $\mathcal{V}$-Posets}
\author{Joshua Cooper, Peter Gartland, and  Hays Whitlatch}

\maketitle

\begin{abstract} 
In 2016, Hasebe and Tsujie gave a recursive characterization of the set of induced $N$-free and bowtie-free posets; Misanantenaina and Wagner studied these orders further, naming them ``$\mathcal{V}$-posets''.  Here we offer a new characterization of $\mathcal{V}$-posets by introducing a property we refer to as \emph{autonomy}.  A poset $\cP$ is said to be \emph{autonomous} if there exists a directed acyclic graph $D$ (with adjacency matrix $U$) whose transitive closure is $\cP$, with the property that any total ordering of the vertices of $D$ so that Gaussian elimination of $U^TU$  proceeds without row swaps is a linear extension of $\cP$. Autonomous posets arise from the theory of pressing sequences in graphs, a problem with origins in phylogenetics.  The pressing sequences of a graph can be partitioned into families corresponding to posets; because of the interest in enumerating pressing sequences, we investigate when this partition has only one block, that is, when the pressing sequences are all linear extensions of a single autonomous poset.  We also provide an efficient algorithm for recognition of autonomy using structural information and the forbidden subposet characterization, and we discuss a few open questions that arise in connection with these posets.
\end{abstract}

\begin{section}{Introduction}

A simple pseudo-graph is a graph that admits loops but not multiple edges (sometimes known as a ``loopy graph''). 
Given a simple pseudo-graph $G$, denote by $V(G)$ the vertex set of $G$; $E(G)\subseteq V(G)\times V(G)$, symmetric as a relation, its edge set. Let $N(v)=N_G(v)=\{w\in V(G):vw\in E(G) \}$ the \emph{neighborhood} of $v$ in $V(G)$.  Observe that $v\in N(v)$ iff $v$ is a looped vertex.  For $S\subset V$, we denote by $G[S]$ the vertex-induced subgraph on $S$.  

\begin{definition}\label{pressedgraph}
Consider a simple pseudo-graph $G$ with a looped vertex $v \in V(G)$.  ``Pressing $v$'' is the operation of transforming $G$ into $G'$, a new simple pseudo-graph in which $G[N(v)]$ is complemented.  That is, 
$$
V(G')=V(G), \quad E(G')=E(G)\triangle\left(N(v)\times N(v)\right)
$$
We denote by $G_{(v)}$ the simple pseudo-graph resulting from pressing vertex $v$ in $V(G)$ and we abbreviate $G_{(v_1)(v_2)\cdots (v_k)}$ to $G_{(v_1, v_2, \ldots ,v_k)}$. For $k\geq 1$ we abbreviate $(1,2,\ldots,k)$ as $\boldsymbol{k}$ so that 
when $V(G)=[n]$ for some $n\geq k$ then we may simplify $G_{(1,2,\ldots,k)}$ to $G_{\boldsymbol{k}}$.  $G_{\boldsymbol{0}}$ and $G_{()}$ are interpreted to mean $G$.  To aid with inductive arguments, we let $G^{(v)}=G_{(v)}-v$: the result of pressing $v$ in $G$ (which leaves it isolated, loopless, and thenceforth unpressable) and then removing the pressed vertex.
\end{definition}

Given a simple pseudo-graph $G$, $(v_1, v_2, \ldots,v_j)$ is said to be a \emph{successful pressing sequence} for $G$ whenever the following conditions are met:
\begin{itemize}
\item $\{v_1, v_2, \ldots,v_k\}\subseteq V(G)$,
\item $v_{i}$ is looped in $G_{(v_1, v_2, \ldots,v_{i-1})}$ for all $1\leq i\leq k$, 
\item $G_{(v_1, v_2, \ldots,v_k)}=(V(G), \emptyset)$
\end{itemize}
In other words, looped vertices are pressed one at a time, with ``success'' meaning that the end result (when no looped vertices are left) is an empty graph.  This topic originated in computational phylogenetics, where Hannenhalli and Pevzner showed that certain simple pseudo-graphs correspond to pairs of genomes and that the reversal edit distance between these genomes is the minimum length of a successful pressing sequence of said graph \cite{hannenhalli1999transforming}.   In  phylogenetics, the simple pseudo-graph corresponds to a pair of homologous genomes and its successful pressing sequences corresponds to a  most plausible (i.e., parsimonious) evolutionary history between the genomes (see \cite{dobzhansky1938inversions,sturtevant1936inversions}).  In the present work we look at the set of simple pseudo-graphs whose pressing sequences correspond to the linear extensions of a single poset.  Since linear extensions can be efficiently sampled asymptotically uniformly, this shows that pressing sequences, and hence the evolutionary histories of the pairs of genomes giving rise to said pseudo-graphs, can be sampled near-uniformly.

\begin{definition}
An \emph{ordered simple pseudo-graph}, abbreviated OSP-graph, is a simple pseudo-graph with a total order on its vertices.  In this paper, we will assume that the vertices of an OSP-graph are subsets of the positive integers under the usual ordering ``$<$''.  An OSP-graph $G$ is said to be \emph{order-pressable} if there exists some initial segment of $V(G)$ that is a successful pressing sequence.
\end{definition}
\begin{figure}[ht]
  \includegraphics[scale=.35]{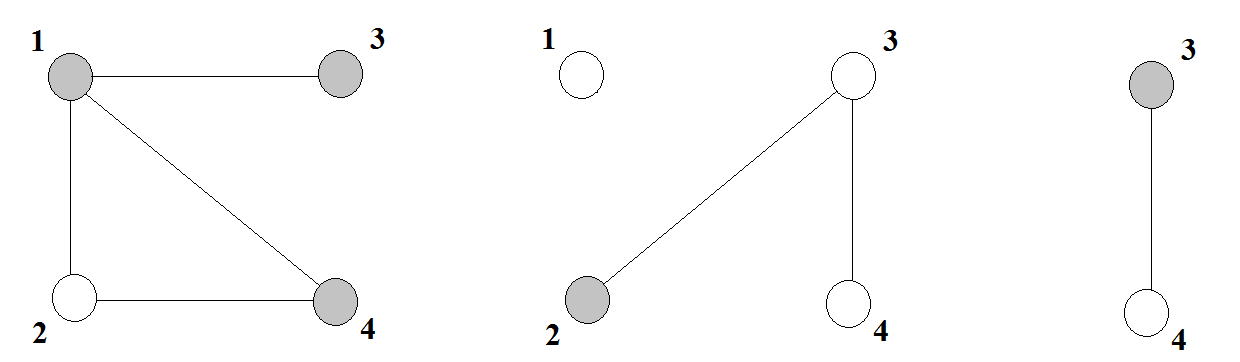}
  \caption{Left to right: an OSP-graph $G$; $G_{(1)}$, the result pressing $1$ in $G$;  and $G^{\boldsymbol{2}}$, the result of pressing and then removing vertices $1$ and $2$ in $G$. Loops are drawn a shaded vertices.  }
\end{figure}

\begin{definition}
It was shown in \cite{cooper2016successful} that pressing the vertices of a simple-pseudo-graph is essentially equivalent to performing Gaussian elimination with no row swaps on its adjacency matrix; therefore, the length of any successful pressing sequence of a simple pseudo-graph is the $\mathbb{F}_2$-rank of its adjacency matrix.  Thus, we define the \emph{rank of a simple pseudo-graph} to be the $\mathbb{F}_2$-rank of its adjacency matrix. The rank of a simple pseudo-graph on $n$ vertices can vary from $0$ (in the case that it is an edgeless simple pseudo-graph) to $n$ (such as is the case in Figure 1).  We say $G$ is \emph{full-rank} if its adjacency matrix is invertible over $\mathbb{F}_2$.  
\end{definition}

Call a matrix $M$ ``Cholesky"  if there exists an upper-triangular matrix $U$ so that $M = U^TU$. 
In \cite{cooper2016successful} a proof was given that Cholesky decompositions of full-rank, $\mathbb{F}_2$ matrices are unique; in  \cite{cooper2017uniquely} it was shown that for every OSP-graph and adjacency matrix $A$ there exists a particular Cholesky decomposition of $A$ that encodes the pressing instructions for $G$.
\begin{definition}
Let $G$ be OSP-graph with adjacency matrix $A$ (whose rows and columns are ordered by the identity permutation).  The \emph{instructional Cholesky root of $G$} (over $\mathbb{F}_2$) is the upper triangular matrix $U$ where for all $(i,j) \in [n] \times [n]$, 
$U[i,j]=1$ if and only if $ij \in E(G_{\mathbf{i-1}})$.  In \cite{cooper2017uniquely} it was shown that $U$ satisfies that $U^TU=A$, therefore is a Cholesky decomposition of $G$.
\end{definition}

The reason this matrix is called ``instructional'' is that it contains the instructions for how vertices affect one another during the corresponding pressing sequence: the $(i,j)$ entry is $1$ iff pressing $i$ flips the state of $j$.  Since the (instructional) Cholesky matrices are upper-triangular we may also regard $U$ as the adjacency matrix of a directed acyclic graph with vertex set $\{v\mid v$ is pressed at some point in the successful pressing sequence$\}$.  Furthermore, the transitive closure of this digraph can be considered as a poset.  Although it is possible to define these instructional posets for less-than-full-rank OSP-graphs, presently we are only concerned with the posets of full-rank OSP-graphs.

We refer to the set of looped vertices in a graph $G$ by $\mathcal{L}(G)$ and the set of successful pressing sequences for $G$ as $\Sigma(G)$.  

\begin{lemma}[{\bf\cite{cooper2016successful}, Theorem 9}]\label{pressifCholesky}
Let $G$ be a full-rank OSP-graph and $\sigma\in \Sigma(G)$.  Let $A$ be the adjacency matrix of $G$ with rows and columns ordered by $\sigma$. $\sigma\in \Sigma(G)$ if and only if $A$ has a Cholesky decomposition over $\mathbb{F}_2$.
\end{lemma}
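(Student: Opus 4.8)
The plan is to prove the biconditional by reformulating a single press as a symmetric rank-one update of the adjacency matrix and then iterating it. Treating $\sigma$ as an arbitrary total order on $V(G)$ (the content of the statement being a characterization of which orders lie in $\Sigma(G)$), after relabeling we may assume $\sigma$ is the identity, so that $A$ is ordered $1,2,\ldots,n$. The key tool, which I regard as the main obstacle, is the following matrix description of a press: if $v$ is looped in a simple pseudo-graph $H$ with adjacency matrix $B$ (so $B[v,v]=1$), then the adjacency matrix of $H_{(v)}$ is $B + B_{:,v}\,B_{:,v}^{T}$ over $\mathbb{F}_2$, where $B_{:,v}$ is the $v$th column of $B$. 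Verifying this amounts to checking that complementing $H[N(v)]$ toggles exactly the entries $(i,j)$ with $B[i,v]=B[v,j]=1$, with the diagonal (loop) entries handled via the identity $x^2=x$ in $\mathbb{F}_2$. When $v$ is the least unpressed vertex, this update zeroes row and column $v$ and leaves the trailing principal submatrix equal to the adjacency matrix of $G^{(v)}$; that is, a press is exactly one step of symmetric Gaussian elimination with pivot $v$, matching the correspondence recalled above.

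For the forward direction I would suppose $\sigma=(1,\ldots,n)\in\Sigma(G)$ and write $A^{(i)}$ for the adjacency matrix of $G_{\boldsymbol{i}}$, with $A^{(0)}=A$. Since each vertex $i$ is looped when pressed, the update identity applies at every step, so $A^{(i)}=A^{(i-1)}+c^{(i)}\bigl(c^{(i)}\bigr)^{T}$ with $c^{(i)}=A^{(i-1)}_{:,i}$, and rows and columns $1,\ldots,i$ of $A^{(i)}$ vanish. Reading off entries identifies $c^{(i)}$ with the transpose of the $i$th row of the instructional Cholesky root $U$, i.e. $c^{(i)}=U^{T}e_i$. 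Telescoping the updates then gives $A^{(n)}=A+\sum_{i=1}^{n}U^{T}e_ie_i^{T}U=A+U^{T}U$. Because the sequence is successful, $G_{\boldsymbol{n}}$ is edgeless and $A^{(n)}=0$, whence $A=U^{T}U$, so the instructional root furnishes the desired Cholesky decomposition. (The requirement that vertex $i$ be looped at step $i$ is precisely $U[i,i]=1$, so $U$ has unit diagonal.)

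For the converse, suppose $A=U^{T}U$ with $U$ upper triangular over $\mathbb{F}_2$. Since $G$ is full-rank, $A$ is invertible, so $(\det U)^2=\det A\neq 0$ and hence the upper-triangular matrix $U$ has all diagonal entries equal to $1$. I would then induct on $n$, peeling off vertex $1$. First, $A[1,1]=\sum_{k}U[k,1]^2=U[1,1]^2=1$, so $1$ is looped and pressable. Partitioning $U=\left(\begin{smallmatrix}1 & u^{T}\\ 0 & U'\end{smallmatrix}\right)$ and applying the update identity to the least vertex shows that the adjacency matrix of $G^{(1)}$ equals $U'^{T}U'$; as $U'$ is again upper triangular with unit diagonal, $G^{(1)}$ is full-rank and admits a Cholesky decomposition. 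By the induction hypothesis $(2,\ldots,n)$ is a successful pressing sequence for $G^{(1)}$, and since $G^{(1)}$ is obtained from $G$ by pressing $1$ and deleting the now-isolated vertex, it follows that $(1,2,\ldots,n)\in\Sigma(G)$. The one genuinely delicate step throughout is the matrix reformulation of a single press, especially the correct treatment of loops on the diagonal over $\mathbb{F}_2$; once that identity is in place, both directions are short. The converse could alternatively be streamlined by invoking the uniqueness of Cholesky decompositions of full-rank $\mathbb{F}_2$ matrices recalled above, but the Schur-complement induction has the merit of being self-contained.
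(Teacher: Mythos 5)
The paper does not prove this lemma at all: it is imported verbatim from the cited reference (\cite{cooper2016successful}, Theorem 9), so there is no internal proof to compare against. Your argument is correct and is essentially the standard one underlying that reference, which the paper itself alludes to when it recalls that pressing is ``equivalent to performing Gaussian elimination with no row swaps on the adjacency matrix'': the rank-one update identity $B \mapsto B + B_{:,v}B_{:,v}^{T}$ (with the diagonal handled by $x^2 = x$ over $\mathbb{F}_2$) is exactly the right formalization of a press, the telescoping sum gives $A = U^TU$ in the forward direction, and the Schur-complement induction for the converse is sound, with the full-rank hypothesis used correctly to force $U$ to have unit diagonal so that vertex $1$ is pressable. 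The only caveat worth flagging is that the lemma's statement is self-referential as written (it hypothesizes $\sigma \in \Sigma(G)$ and then characterizes when $\sigma \in \Sigma(G)$); your reading of $\sigma$ as an arbitrary total order to be tested is the intended one and is the reading under which your proof goes through.
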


\begin{definition}
Let $G$ be a full-rank OSP-graph and $\sigma \in \Sigma(G)$. Let $U$ be the instructional Cholesky root of A=adj$(G)$, with rows and columns ordered identically by $\sigma$,  and $D$ the digraph with  vertex set $V(G)$ and adjacency matrix $U$. The \emph{instructional poset of $G$ under $\sigma$} is $\poset(G,\sigma)=(V(G),\preceq)$ where $y\preceq x$ (equivalently $x\succeq y$) if there is an $x$ to $y$ path in $D$, i.e., $\poset(G,\sigma)$ is the transitive closure of $D$.
\end{definition}

We say \emph{$\cP$ is generated by $G$}, or equivalently \emph{$G$ is a generator of $\cP$},  if  $\poset(G,\sigma)=\cP$ for some $\sigma \in \Sigma(G)$.  If $\sigma$ is the natural order given by $G$ (typically the identity permutation) we simply write $\poset(G)$.  We denote the set of instructional posets of an OSP-graph $G$ by $\mathfrak{S}(G)$.  

\begin{center}
\begin{figure}[ht]
  \includegraphics[scale=.35]{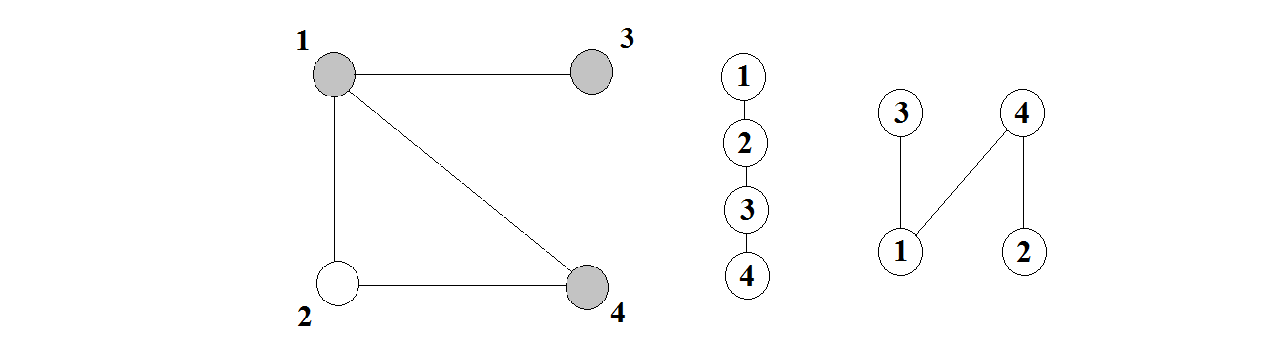}
  \caption{An order-pressable graph $G$ and the Hasse diagrams of the two posets it generates.}
\end{figure}
\end{center}
\begin{example}
Let $\cP$ be a poset on the element set $[4]=\{1,2,3,4\}$ with cover relations $1\succ 3$, $2\succ 3$, $3\succ 4$. Then any OSP-graph that generates $\cP$ must have an adjacency matrix $A=U^TU$ where $U$ is of the form $\begin{bmatrix}1&0&1&*\\0&1&1&*\\0&0&1&1\\0&0&0&1\\ \end{bmatrix}$. It follows that $\mathcal{P}$ has four generators, as shown below.

\begin{center}
\begin{figure}[ht]
  \includegraphics[scale=.46]{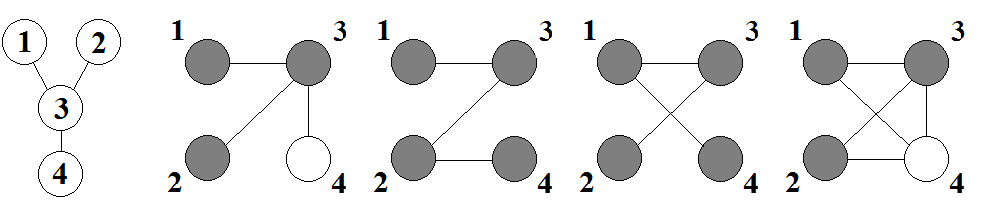}
  \caption{The Hasse diagram of $\cP$ and its four generators.}
\end{figure}
\end{center}

\end{example}

We finish this section with two more lemmas from \cite{cooper2016successful} which we will need below.

\begin{lemma}[\cite{cooper2016successful}, Proposition 1]\label{pressifloopedcomponents}
Let $G$ be an OSP-graph. $\Sigma(G)\neq \emptyset$ if and only if every component of $G$ containing two or more vertices contains a looped vertex.
\end{lemma}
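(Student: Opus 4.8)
The plan is to prove the two implications separately; the forward direction is short and the reverse direction is the substantive one. For the forward direction I will argue the contrapositive. Suppose some component $C$ of $G$ has at least two vertices but no loop. The only effect of a press is to replace $E$ by $E\triangle(N(v)\times N(v))$ for a \emph{looped} vertex $v$, and since $N(v)$ lies entirely inside the component of $v$, a press never alters any edge or loop outside that one component. Hence no press performed on a vertex of another component can touch $C$; moreover $C$ starts with no looped vertex to press and never acquires one, so by induction on the length of the pressing sequence the induced subgraph on $C$ is frozen. As $C$ is connected with at least two vertices it carries at least one non-loop edge, which therefore survives forever, so no pressing sequence can reach $(V(G),\emptyset)$ and $\Sigma(G)=\emptyset$.

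For the reverse direction I first note that pressing is component-local in the same sense, so $\Sigma(G)\neq\emptyset$ exactly when each component has its own successful pressing sequence; I may therefore assume $G$ is connected with $\ge 2$ vertices and at least one loop, and induct on $|V(G)|$. The single computation I need is that pressing a looped $v$ isolates $v$, toggles the loop on every vertex of $N(v)\setminus\{v\}$, complements the adjacencies within $N(v)\setminus\{v\}$, and leaves all other edges and loops unchanged. The whole reverse direction then reduces to a \emph{Safe-Vertex Lemma}: in a connected $G$ with $\ge 2$ vertices and a loop there is a looped vertex $v$ such that every component of $G^{(v)}$ with $\ge 2$ vertices again contains a loop. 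Granting this, I press such a $v$ (legal since $v$ is looped), delete the now-isolated $v$, apply the induction hypothesis to each resulting component, and concatenate the sequences with $v$ prepended to obtain an element of $\Sigma(G)$.

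Locating a safe $v$ is the heart of the matter. The clean case is a looped vertex $v$ with no looped neighbor: pressing it turns every vertex of $N(v)\setminus\{v\}$ into a looped vertex, so every component of $G^{(v)}$ meeting $N(v)$ has a loop, while a component of $G^{(v)}$ disjoint from $N(v)$ would be an unchanged, loopless, nontrivial component of the connected graph $G$ not containing $v$, which is impossible. More generally, a \emph{dangerous} component $C'$ of $G^{(v)}$ (nontrivial and loopless) must meet $N(v)$; every vertex of $C'$ that was looped in $G$ must lie in $N(v)$ (its loop having been toggled off); and $C'$ must avoid the loopless neighbors of $v$ (which acquire loops). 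This dichotomy is exactly what will constrain the admissible choices of $v$.

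The main obstacle is the case in which every looped vertex has a looped neighbor, where an arbitrary choice genuinely fails and, worse, the correct choice depends delicately on the loop pattern. For the path $a-b-c$ with only $a,b$ looped, pressing the endpoint $a$ strands the loopless edge $bc$, whereas pressing $b$ is safe because its loopless neighbor $c$ acquires a loop; yet for the same path with \emph{all three} vertices looped the situation reverses, as pressing $b$ strands the loopless edge $ac$ while pressing an endpoint is safe. So the safe vertex must be extracted from a careful case analysis guided by the dichotomy above rather than from any fixed positional rule, and verifying that the chosen $v$ never strands a loopless nontrivial component is the crux. I expect to corroborate this with the linear-algebraic picture: a press is symmetric Gaussian elimination on a unit diagonal pivot over $\mathbb{F}_2$, a successful pressing sequence is a reduction of the adjacency matrix to $0$ by such pivots, and a component with a loop at $i$ has $e_i^{\top}Ae_i=1$, so its form is non-alternating and hence (over $\mathbb{F}_2$) congruent to $I_r$. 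This confirms that the sole obstruction is an alternating, i.e.\ loopless, component; the remaining difficulty — and the real content of the Safe-Vertex Lemma — is that pivots are restricted to vertices rather than to arbitrary basis vectors, so one must exhibit a safe \emph{vertex} pivot at each step rather than merely an abstract orthogonal basis.
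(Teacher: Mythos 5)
Your forward direction is complete and correct: a press only alters edges inside $N(v)\times N(v)$, hence inside the component of the pressed vertex, so a loopless component on two or more vertices never acquires a pressable vertex and never loses its edges, and $\Sigma(G)=\emptyset$. Your reduction of the reverse direction to a ``Safe-Vertex Lemma'' (in a connected graph with at least two vertices and a loop, some looped vertex $v$ has the property that every nontrivial component of $G^{(v)}$ again contains a loop) is also the right architecture, and your treatment of the clean case --- a looped vertex with no looped neighbor is always safe --- is sound.

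The genuine gap is that you never prove the Safe-Vertex Lemma outside that clean case, and that remaining case is the entire content of the hard direction. Your two three-vertex path examples correctly show that no fixed positional rule selects the safe vertex, and your dichotomy correctly constrains what a dangerous component of $G^{(v)}$ must look like, but the proposal then stops at ``a careful case analysis guided by the dichotomy'' without exhibiting that case analysis or any other existence argument. The linear-algebraic coda does not close the hole, for exactly the reason you yourself concede: congruence of a non-alternating symmetric form over $\mathbb{F}_2$ to the identity permits pivots on arbitrary basis vectors, whereas pressing permits pivots only on coordinate vectors, so the classification of forms yields no safe \emph{vertex}. As written, your argument establishes necessity of the condition, plus sufficiency only in the special case where at every stage some looped vertex has no looped neighbor. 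To finish you would need to actually produce a safe pivot in general --- for instance by showing that if pressing a looped vertex $v$ strands a loopless nontrivial component $C'$, then some looped vertex of $N(v)\cap V(C')$ (which is nonempty by your dichotomy) is a strictly better choice, and that this process terminates. That existence argument is the substance of Proposition~1 of the cited source; note the paper itself imports the lemma from there without reproving it, so there is no internal proof to compare against, but the burden you set yourself is precisely the step you left undone.
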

\begin{lemma}[\cite{cooper2016successful}, Theorem 9]\label{pressifLPN}
Let $G$ be a full-rank OSP-graph and $\sigma\in \Sigma(G)$.  Let $A$ be the adjacency matrix of $G$ with rows and columns ordered by $\sigma$. $\sigma\in \Sigma(G)$ if and only if every leading principal minor (over $\mathbb{F}_2$) of $A$ is non-zero.
\end{lemma}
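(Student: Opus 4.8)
The plan is to obtain this from Lemma~\ref{pressifCholesky}, which already equates $\sigma \in \Sigma(G)$ with the existence of a Cholesky decomposition $A = U^T U$ (with $U$ upper triangular) over $\mathbb{F}_2$, where $A = \mathrm{adj}(G)$ is ordered by $\sigma$. This reduces the lemma to the purely linear-algebraic claim that a symmetric, full-rank matrix $A$ over $\mathbb{F}_2$ admits a Cholesky decomposition if and only if every leading principal minor $\det(A_k)$ is nonzero, where $A_k$ denotes the top-left $k \times k$ submatrix and $k$ ranges over $1, \dots, n$. Here $A$ is symmetric because it is an adjacency matrix, and the full-rank hypothesis gives $\det A \neq 0$. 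This equivalence is the $\mathbb{F}_2$ analogue of the classical theorem characterizing existence of an $LDL^T$ factorization (equivalently, row-swap-free Gaussian elimination) by nonvanishing of the leading principal minors, and I would prove it directly over $\mathbb{F}_2$, where matters simplify.

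For the forward direction, suppose $A = U^T U$. Since $A$ is invertible so is $U$, and over $\mathbb{F}_2$ an invertible upper-triangular matrix has every diagonal entry equal to $1$. Writing $U$ in block form with leading $k \times k$ block $U_k$, a block multiplication shows that the top-left $k \times k$ block of $U^T U$ is $U_k^T U_k$; hence $A_k = U_k^T U_k$ and $\det(A_k) = \det(U_k)^2 = 1 \neq 0$ for each $k$.

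For the converse I would induct on $n$ using the Schur complement. The nonvanishing $1 \times 1$ minor forces $a_{11} = 1$, so write $A = \begin{pmatrix} 1 & b^T \\ b & A' \end{pmatrix}$ and set $\tilde A := A' + b b^T$, which is again symmetric. From the factorization $A = \begin{pmatrix} 1 & 0 \\ b & I\end{pmatrix}\begin{pmatrix} 1 & 0 \\ 0 & \tilde A\end{pmatrix}\begin{pmatrix} 1 & b^T \\ 0 & I\end{pmatrix}$, together with the Schur-complement determinant identity applied to each leading block, one obtains $\det(A_{k+1}) = \det(\tilde A_k)$ for $k = 1, \dots, n-1$. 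Thus every leading principal minor of the $(n-1) \times (n-1)$ matrix $\tilde A$ is nonzero, and by the inductive hypothesis $\tilde A = U'^T U'$ with $U'$ upper triangular. Then $U := \begin{pmatrix} 1 & b^T \\ 0 & U'\end{pmatrix}$ satisfies $U^T U = A$, giving the desired Cholesky root.

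The main obstacle is the converse, and specifically checking that the leading principal minors of the Schur complement $\tilde A$ inherit nonvanishing from those of $A$; the Schur-complement determinant identity is what carries this through. Working over $\mathbb{F}_2$ streamlines the argument considerably: the pivots are forced to equal $1$ rather than arbitrary nonzero scalars, signs and scalings disappear, and $-1 = 1$ makes the symmetric elimination and its inverse coincide, so that once the determinant bookkeeping is in place the induction closes immediately.
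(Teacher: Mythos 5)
The paper does not prove this lemma at all: it is imported verbatim as Theorem~9 of \cite{cooper2016successful}, so there is no internal proof to compare against. Judged on its own, your argument is correct and complete. The reduction via Lemma~\ref{pressifCholesky} to the purely matrix-theoretic statement ``symmetric invertible $A$ over $\mathbb{F}_2$ is Cholesky iff LPN'' is the right move, the forward direction via $A_k = U_k^T U_k$ and $\det(U_k)=1$ is sound, and the converse closes properly: the block factorization
$$A=\begin{pmatrix} 1 & 0 \\ b & I\end{pmatrix}\begin{pmatrix} 1 & 0 \\ 0 & \tilde A\end{pmatrix}\begin{pmatrix} 1 & b^T \\ 0 & I\end{pmatrix}$$
restricts to every leading block, giving $\det(A_{k+1})=\det(\tilde A_k)$, so $\tilde A = A'+bb^T$ is symmetric with all leading principal minors nonzero (in particular full-rank, since $\det\tilde A=\det A$), the inductive hypothesis applies, and $U=\bigl(\begin{smallmatrix}1 & b^T\\ 0 & U'\end{smallmatrix}\bigr)$ reassembles the Cholesky root. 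The only cosmetic blemish you inherit from the paper is the circular phrasing of the statement itself ($\sigma\in\Sigma(G)$ appears both as hypothesis and as one side of the equivalence); your proof correctly treats $\sigma$ as an arbitrary total order, which is the intended reading.
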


\end{section}
\begin{section}{Structure of Autonomous Posets}
We denote the set of linear extensions of a poset $\cP$ by $\linext(\cP)$.  
\begin{lemma}\label{linear extensions are pressing sequences}
If $G$ is a full-rank OSP-graph then $\linext(\cP(G,\sigma))\subseteq \Sigma(G)$ for all $\sigma\in \press(G)$. That is, $\Sigma(G)=\bigcup_{\cP \in \mathfrak{S}(G) }\linext(\cP) $.
\end{lemma}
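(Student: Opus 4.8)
The plan is to establish the containment $\linext(\cP(G,\sigma)) \subseteq \Sigma(G)$ first; the displayed set identity then follows almost immediately. For that identity, observe that every $\sigma \in \press(G)$ is itself a linear extension of $\cP(G,\sigma)$: the digraph $D$ underlying $\cP(G,\sigma)$ has the upper-triangular instructional root $U$ as its adjacency matrix, so every non-loop arc runs forward in $\sigma$, whence $x \succ y$ in $\cP(G,\sigma)$ forces $x$ to precede $y$ in $\sigma$. This gives $\Sigma(G) \subseteq \bigcup_{\cP \in \mathfrak{S}(G)} \linext(\cP)$, and the reverse containment is precisely the first statement applied to a generator of each $\cP \in \mathfrak{S}(G)$.

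To prove $\linext(\cP(G,\sigma)) \subseteq \Sigma(G)$, I would use the standard fact that the set of linear extensions of a finite poset $\cP := \cP(G,\sigma)$ is connected under adjacent transpositions of incomparable elements: given two extensions, one repeatedly finds the first position where they disagree and bubbles the needed element forward past elements that are necessarily incomparable to it, each move swapping two consecutive incomparable elements while staying inside $\linext(\cP)$. Since $\sigma \in \Sigma(G) \cap \linext(\cP)$, it then suffices to prove the following swap lemma and induct along such a path: if $\rho \in \Sigma(G)$ with $\cP(G,\rho) = \cP$, and $\rho'$ is obtained from $\rho$ by interchanging consecutive entries $\rho_i,\rho_{i+1}$ that are incomparable in $\cP$, then $\rho' \in \Sigma(G)$ and $\cP(G,\rho') = \cP$.

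The heart of the matter is an algebraic reformulation of this swap lemma. Let $U$ be the instructional Cholesky root of $A = \mathrm{adj}(G)$ ordered by $\rho$, and let $P$ be the permutation matrix interchanging $i$ and $i+1$, so reordering by $\rho'$ conjugates $A$ to $A' = P A P^{T} = (PUP^{T})^{T}(PUP^{T})$. Because all arcs of $D$ move forward in $\rho$, the only way a directed path can join position $i$ to position $i+1$ is the single arc between them; hence $\rho_i,\rho_{i+1}$ are comparable in $\cP$ if and only if $U[i,i+1]=1$. When they are incomparable, $U[i,i+1]=0$, and together with $U[i+1,i]=0$ and unit diagonal this makes the $2\times 2$ block of $U$ on $\{i,i+1\}$ the identity; a short entrywise check then shows $U' := PUP^{T}$ is again upper-triangular. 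Thus $A' = (U')^{T}U'$ is a Cholesky decomposition, so $\rho' \in \Sigma(G)$ by Lemma \ref{pressifCholesky}. Finally, uniqueness of the Cholesky root over $\mathbb{F}_2$ identifies $U'$ as the instructional root of $\rho'$, and since conjugating $U$ by $P$ merely relabels the two vertices $\rho_i,\rho_{i+1}$ without changing which ordered pairs of vertices of $G$ are arcs, the underlying digraph — and hence $\cP(G,\rho')=\cP$ — is unchanged.

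The step I expect to require the most care is verifying that $PUP^{T}$ stays upper-triangular, since conjugating an upper-triangular matrix by a transposition generally destroys triangularity; it is exactly the vanishing entry $U[i,i+1]=0$, i.e. incomparability, that rescues it, so this is where the hypothesis is used decisively. The connectivity of $\linext(\cP)$ under adjacent transpositions is classical and the reduction of the displayed identity to the containment is routine, so the swap lemma carries essentially all the weight. (A more computational alternative would induct on $|V(G)|$ by pressing a maximal element $v$ of $\cP$ first and checking that the residual graph $G^{(v)}$ inherits the required structure; I expect the transposition argument to be cleaner, as it avoids tracking how $\cP$ restricts after a press.)
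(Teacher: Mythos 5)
Your proposal is correct, but it reaches the conclusion by a more roundabout route than the paper. The shared kernel is the same algebraic observation: if $U$ is the instructional Cholesky root ordered by a successful pressing sequence and $P$ is a permutation matrix, then $P^TUP$ remains upper-triangular precisely when the permutation respects the partial order, whence $P^TAP=(P^TUP)^T(P^TUP)$ is a Cholesky factorization and Lemma~\ref{pressifCholesky} applies. The paper executes this in a single step: for an \emph{arbitrary} linear extension $\tau$ of $\cP(G,\sigma)$, every arc of the digraph $D$ runs from a larger to a smaller element of the poset, hence forward in $\tau$, so $P^TUP$ is upper-triangular outright and $\tau\in\Sigma(G)$ with no induction needed. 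You instead factor the passage from $\sigma$ to $\tau$ through the classical fact that $\linext(\cP)$ is connected under adjacent transpositions of incomparable elements, and verify a swap lemma (using $U[i,i+1]=0$ for incomparable consecutive entries) at each step. Your swap lemma is sound --- the entrywise check that the transposition preserves upper-triangularity is exactly right, and your observation that comparability of consecutive entries is equivalent to $U[i,i+1]=1$ is correct since all arcs run forward --- but the transposition machinery and the appeal to connectivity of the linear extension graph are overhead the direct argument avoids. What your version buys is a slightly stronger local statement (the swap lemma, which also tracks that the generated poset is unchanged step by step); what it costs is dependence on an external classical lemma and an induction where none is required. You do handle one point more explicitly than the paper: the reduction of the displayed set identity to the containment, via the remark that each $\sigma\in\Sigma(G)$ is itself a linear extension of $\cP(G,\sigma)$, which the paper leaves implicit.
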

\begin{proof}
Let $G=([n],E)$ be an OSP-graph of rank $n$ ordered by successful pressing sequence $\sigma$.  By relabeling $G$ we may assume $\sigma$ is the identity permutation.  Let $A$ be the adjacency matrix of $G$ (with rows and columns ordered by  $\sigma$) and $U$ be its instructional Cholesky root (identically ordered).  Let $D=([n],\overrightarrow{E})$ be the directed acyclic graph (aka ``DAG'') with adjacency matrix $U$.  
Let $\cP=\poset(G)=([n],\preceq_P)$ and observe that if $(a,b)\in \overrightarrow{E}$ then $a\succeq_p b$.

Fix a linear extension $\tau=(\tau_1, \tau_2, \ldots,\tau_n )$ of $\cP$.  By the previous observation,
 if $(\tau_i, \tau_j)\in   \overrightarrow{E}$ then $\tau_i\succeq_P\tau_j$ and hence $\tau_i$ must appear before $\tau_j$ in $\tau=(\tau_1, \tau_2, \ldots,\tau_n )$. 
 Thus, $(\tau_i, \tau_j)\in   \overrightarrow{E}$ implies $i\leq j \in \mathbb{N}$.   By contraposition, we have that $$i>j \textrm{ implies } (\tau_i, \tau_j)\notin   \overrightarrow{E}.$$
Let $P$ be the permutation matrix encoding $\tau$.  The previous assertion can be restated as  
$$\left[P^TUP\right]_{i,j}=0 \textrm{ for all }i>j.$$
Then $V=P^TUP$ is an upper-triangular matrix and $$V^TV=(P^TUP)^T(P^TUP)=P^TU^TUP=P^TAP.$$

Observe that $P^TAP$ is a full-rank symmetric matrix with a Cholesky decomposition given by $V$.    It follows from Lemma \ref{pressifCholesky} that $\tau$ is a successful pressing sequence for $G$.

\end{proof}

\begin{definition} 
We say an OSP-graph $G$ is an \emph{autonomous graph} if $\Sigma(G)=\linext(\poset(G))$.  We say  $\cP$ is an \emph{autonomous poset} if there exists an autonomous  graph $G$ that generates $\cP$.  That is, if there exists an OSP-graph $G$ such that $\poset(G,\sigma)=\cP$ for some  $\sigma\in \Sigma(G)$  and $\Sigma(G)=\linext(\cP)$.
\end{definition}
In our main theorem, we will show that the set of autonomous posets is precisely the set of induced $N$-free and induced bowtie-free posets (referred to in \cite{misanantenaina2018tutte} as ``$\mathcal{V}$-posets'').

\begin{definition}
For a graph $G$ and a vertex $x\notin V(G)$ we let $x\oplus G$ be the graph with vertex set $V(G)\cup \{x\}$, edge set $E(G)\triangle  \binom{\mathcal{L}(G)\cup \{x\}}{2}$, and $\mathcal{L}(x \oplus G)=\{x\}$.  Equivalently, $x\oplus G$ is the graph that results from adding a looped vertex $x$ to $V(G)$ and making it incident to each looped vertex in $G$ to get an intermediate graph $H$, then switching the state of each edge (including loops and non-loops) in $N_{H}(x)\setminus \{x\}$. We refer to this process as \emph{left-appending $x$ to $G$}, we justify this terminology in the following observation.   
\end{definition}
\begin{observation}\label{left append observation} \textrm{ }\\
Consider OSP-graphs $G$ and $H=x\oplus G$. Let $\tau=(\tau_1, \tau_2, \ldots, \tau_{n+1}) \in \Sigma(H)$.  Since  $\mathcal{L}(H)=\{x\}$ we have that $\tau_1=x$. Furthermore, pressing $x$ switches the state of every edge in $N_{H}(x)$ so $H^{(x)}=G$. Thus, the successful pressing sequences of $H$ are exactly those resulting from  left-appending $x$ to the successful pressing sequences of $G$.
If $G$ is order-pressable with instructional Cholesky root $U$, then $x\oplus G$ is order-pressable and has instructional Cholesky root $V$ that satisfies $$V[i,j]=\begin{cases} 
U[i-1,j-1] & \textrm{if $i,j\geq 2$}\\
1 & \textrm{if $i=1$ and $j\in \mathcal{L}(G)$}\\
0 & \textrm{otherwise.}\\
\end{cases} $$
\end{observation}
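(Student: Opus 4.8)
The plan is to prove the two assertions of the observation in turn: first the description of $\Sigma(H)$, and then the block formula for the instructional Cholesky root $V$ of $H = x\oplus G$.

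For the statement on pressing sequences, the key step is to verify the asserted identity $H^{(x)}=G$ by direct computation. Since $\mathcal L(H)=\{x\}$, the only vertex pressable first is $x$, which forces $\tau_1=x$ for every $\tau\in\Sigma(H)$; I would open with this. Next I would identify $N_H(x)=\{x\}\cup\mathcal L(G)$: by construction $x$ is joined to each vertex of $\mathcal L(G)$ and carries the unique loop. Pressing $x$ complements $H[N_H(x)]$, which toggles (i) the loop at $x$ (removing it) and the loops at each vertex of $\mathcal L(G)$ (creating them), (ii) the edges from $x$ to $\mathcal L(G)$ (deleting them, so $x$ becomes isolated), and (iii) all pairs inside $\mathcal L(G)$. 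Because $H$ was obtained from $G$ by taking the symmetric difference with $\binom{\mathcal L(G)\cup\{x\}}{2}$ and declaring $x$ the unique loop, applying the symmetric difference a second time to the pairs inside $\mathcal L(G)$ restores $E(G)$ there, and re-creating the loops of $\mathcal L(G)$ restores $\mathcal L(G)$ as the loop set; deleting the now-isolated $x$ yields exactly $G$. The desired bijection follows: every $\tau\in\Sigma(H)$ has the form $(x,\tau')$ where $\tau'$ (which cannot reuse the now-unpressable $x$) empties $H_{(x)}$ iff it empties $H^{(x)}=G$, so $\tau'\in\Sigma(G)$; conversely left-appending $x$ to any $\sigma\in\Sigma(G)$ is successful for $H$.

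For the Cholesky root, I would argue at the level of matrices and invoke uniqueness, rather than tracing the pressing sequence of $H$ entry by entry. Write $A_G=\mathrm{adj}(G)$ and $A_H=\mathrm{adj}(H)$, let $\mathbf r\in\mathbb F_2^{\,n}$ be the indicator vector of $\mathcal L(G)$, and consider the upper-triangular candidate
$$V=\begin{bmatrix} 1 & \mathbf r^T \\ \mathbf 0 & U \end{bmatrix},$$
whose lower-right block is $U$, whose first column below the diagonal vanishes, and whose first row has $V[1,1]=1$ (as $x$ is looped in $H$) together with $1$'s exactly in the positions of $\mathcal L(G)$. Since $\sigma$ is a successful pressing sequence, each diagonal entry $U[i,i]$ equals $1$ (vertex $i$ is looped in $G_{\mathbf{i-1}}$ when pressed), so $V$ is unit upper-triangular and hence invertible. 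The main computation is the block product
$$V^TV=\begin{bmatrix} 1 & \mathbf r^T \\ \mathbf r & \mathbf r\mathbf r^T+U^TU \end{bmatrix},$$
which I would check equals $A_H$. The $(1,1)$ entry is $1$, matching the loop at $x$; the border vector $\mathbf r$ matches the adjacencies of $x$ to $\mathcal L(G)$; and in the lower-right block $U^TU=A_G$ gives $\mathbf r\mathbf r^T+A_G$, whose off-diagonal is $A_G\triangle\binom{\mathcal L(G)}{2}$ (since over $\mathbb F_2$ the matrix $\mathbf r\mathbf r^T$ has off-diagonal support $\binom{\mathcal L(G)}{2}$), agreeing with $E(H)$ restricted to $V(G)$, and whose diagonal vanishes because the loops of $G$ are cancelled by the diagonal of $\mathbf r\mathbf r^T$, matching the fact that $H$ carries no loops on $V(G)$.

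Finally, since $V$ is invertible the matrix $A_H=V^TV$ is full-rank, so by uniqueness of Cholesky decompositions over $\mathbb F_2$ \cite{cooper2016successful} the upper-triangular root $V$ must coincide with the instructional Cholesky root of $H$, establishing the displayed formula; order-pressability of $H$ was already obtained in the first part. The step I expect to be the main obstacle is the bookkeeping showing that pressing $x$ exactly inverts the construction of $H$ — in particular confirming the double symmetric difference on the pairs of $\mathcal L(G)$ and the loop toggling — together with the parallel verification in the lower-right block, where the diagonal cancellation $A_G+\mathbf r\mathbf r^T$ precisely encodes the removal of $G$'s loops in $H$. The remaining arithmetic is routine block multiplication, and the identification of $V$ is then immediate from the cited uniqueness.
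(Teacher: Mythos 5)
Your proposal is correct and follows essentially the same route as the paper: the first part is the paper's own inline argument (forced first press $x$, then $H^{(x)}=G$ by unwinding the symmetric difference), and the second part is exactly the uniqueness-of-Cholesky-factorization argument the paper invokes just before its right-append observation, here carried out via the block product $V^TV=A_H$. The only caveat worth noting is that the uniqueness step requires $G$ (hence $A_H$) to be full rank, which is the setting the paper works in throughout.
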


\begin{definition}
For a graph $G$ and a vertex $x\notin V(G)$ we let $ G \oplus x$ be the graph with vertex set $V(G)\cup \{x\}$, edge set $E(G)\cup \{ lx \mid l \in \mathcal{L}(G)\} $, and $$\mathcal{L}(G \oplus x)=\begin{cases}
\mathcal{L}(G) & \textrm{if $|V(G)|$ is odd}\\
\mathcal{L}(G)\cup \{x\} & \textrm{if $|V(G)|$ is even}\\
\end{cases}$$  
Equivalently, $G\oplus x$ is the graph that results from adding a vertex $x$ to $V(G)$,  making it incident to each looped vertex in $G$, and, if the resulting graph has an odd number of vertices, then we add a loop to $x$. We refer to this process as \emph{right-appending $x$ to $G$}.
\end{definition}

\begin{figure}[ht]
\includegraphics[scale=.35]{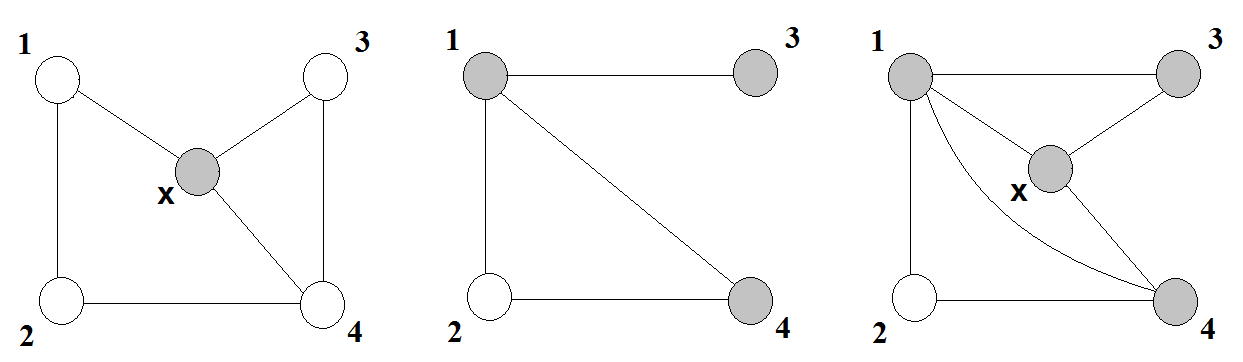}
  \caption{OSP-graphs $x\oplus G$, $G$, and $G\oplus x$, respectively.}
\end{figure}

Recall that the instructional Cholesky root of an OSP-graph is unique. In particular, if $H$ is a full-rank graph and $V^TV$ is a Cholesky factorization of $A=\textrm{adj}(H)$ then  $V$ must be the instructional Cholesky root of $H$; from this we get the following observation.
\begin{observation}\label{right append observation}\textrm{ }\\
If $G$ is order-pressable graph on $n$ vertices and has instructional Cholesky root $U$ then $G\oplus x$ is order-pressable and has instructional Cholesky root $V$ where $$V[i,j]=\begin{cases} 
U[i,j] & \textrm{if $i,j\leq n$}\\
1 & \textrm{if $j=n+1$}\\
0 & \textrm{otherwise.}\\

\end{cases}$$
\end{observation}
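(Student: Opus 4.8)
The plan is to exploit the uniqueness of instructional Cholesky roots recalled immediately before the statement: to identify the instructional Cholesky root of $G\oplus x$ it suffices to exhibit \emph{some} full-rank upper-triangular $V$ with $V^TV=\mathrm{adj}(G\oplus x)$. By uniqueness that $V$ is then forced to be the instructional root, and by Lemma \ref{pressifCholesky} the existence of such a Cholesky factorization (with rows and columns in the natural order) certifies that the natural order $(1,\ldots,n,n+1)$ is a successful pressing sequence, hence that $G\oplus x$ is order-pressable. So I would take $V$ to be exactly the matrix claimed in the statement and verify these two properties by a direct block computation.

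First I would record the block structure. Writing $A=\mathrm{adj}(G)=U^TU$, letting $\mathbf{1}$ denote the all-ones column of length $n$ and $\mathbf{0}$ the zero column of length $n$, the claimed root is
$$V=\begin{bmatrix} U & \mathbf{1} \\ \mathbf{0}^T & 1\end{bmatrix}.$$
Since $U$ is upper triangular, so is $V$; and since $G$ is full rank its instructional root $U$ is invertible, so every diagonal entry of $U$ equals $1$, whence $\det V=\det U=1$ and $G\oplus x$ is full rank (which is what licenses the appeal to uniqueness). Block multiplication then gives
$$V^TV=\begin{bmatrix} U^TU & U^T\mathbf{1} \\ \mathbf{1}^TU & \mathbf{1}^T\mathbf{1}+1\end{bmatrix}.$$
The remaining task is to match each block against $\mathrm{adj}(G\oplus x)$. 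The top-left block is $U^TU=A=\mathrm{adj}(G)$, which is correct because right-appending leaves every edge within $V(G)$ unchanged. For the off-diagonal block, the $i$-th entry of $U^T\mathbf{1}$ is $\sum_j U[j,i]$; because each entry of $U$ lies in $\{0,1\}$ we have $U[j,i]^2=U[j,i]$, so $\sum_j U[j,i]=\sum_j U[j,i]^2=(U^TU)[i,i]=A[i,i]$. Thus the $i$-th off-diagonal entry equals the loop status of vertex $i$ in $G$, which is precisely the rule ``$i$ is adjacent to $x$ iff $i\in\mathcal{L}(G)$'' from the definition of $G\oplus x$. Finally the bottom-right entry is $\mathbf{1}^T\mathbf{1}+1=n+1\pmod 2$, which equals $1$ exactly when $n$ is even, matching the convention that $x$ is looped iff $|V(G)|=n$ is even.

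I expect no genuine obstacle, since the statement is essentially a verification; the one step that is not purely formal, and where I would be most careful, is the identity $\sum_j U[j,i]=A[i,i]$ over $\mathbb{F}_2$. This is what silently converts ``column sums of the Cholesky root'' into ``loops of $G$,'' and hence what makes the single all-ones column appended in $V$ produce exactly the edges from $x$ to $\mathcal{L}(G)$. Everything else — upper-triangularity, the determinant bookkeeping ensuring $G\oplus x$ is full rank, and the parity computation for the loop on $x$ — is routine. Once $V^TV=\mathrm{adj}(G\oplus x)$ is established with $V$ full-rank upper triangular, uniqueness identifies $V$ as the instructional Cholesky root and Lemma \ref{pressifCholesky} yields order-pressability, completing the argument.
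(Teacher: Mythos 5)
Your proposal is correct and follows exactly the route the paper intends: it invokes the uniqueness of full-rank Cholesky roots (recalled in the paragraph immediately preceding the observation) and verifies by block multiplication that the claimed $V$ satisfies $V^TV=\mathrm{adj}(G\oplus x)$, with the identity $\sum_j U[j,i]=A[i,i]$ over $\mathbb{F}_2$ correctly converting the appended all-ones column into edges from $x$ to $\mathcal{L}(G)$ and the parity of $n+1$ matching the loop convention on $x$. The paper leaves these details unwritten, so your write-up simply supplies the verification the authors treat as immediate.
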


\begin{lemma}
If $G$ is autonomous then so is $x\oplus G$.
\end{lemma}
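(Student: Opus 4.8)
The plan is to show that left-appending a vertex $x$ preserves autonomy by directly comparing the two defining quantities, $\Sigma(x\oplus G)$ and $\linext(\poset(x\oplus G))$, using the structural information already collected in Observation \ref{left append observation}. By Lemma \ref{linear extensions are pressing sequences} we always have $\linext(\poset(x\oplus G))\subseteq\Sigma(x\oplus G)$, so the content is the reverse inclusion: every successful pressing sequence of $x\oplus G$ is a linear extension of its instructional poset. Since $G$ is autonomous we may fix an autonomous generator and assume $\Sigma(G)=\linext(\poset(G))$.

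First I would pin down the poset $\poset(x\oplus G)$ in terms of $\poset(G)$. Observation \ref{left append observation} gives the instructional Cholesky root $V$ of $H=x\oplus G$ explicitly: it is $U$ shifted into the lower-right $n\times n$ block, with a new first row that has a $1$ exactly in the columns indexed by $\mathcal{L}(G)$ and a $1$ on the diagonal, and zeros elsewhere in the first column. Reading $V$ as the adjacency matrix of a DAG, the transitive closure adds $x$ as a new element that sits above (in the $\succeq$ direction, since $x$ is pressed first) every vertex reachable from the looped vertices of $G$; concretely $x\succeq_{H} y$ for every $y$ in the down-set generated by $\mathcal{L}(G)$, and the restriction of $\poset(H)$ to $V(G)$ is exactly $\poset(G)$. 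The key combinatorial point is that $x$ is a maximum-type element that must be pressed first.

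Next I would exploit the first bullet of Observation \ref{left append observation}: because $\mathcal{L}(H)=\{x\}$, any $\tau\in\Sigma(H)$ must begin with $\tau_1=x$, and $H^{(x)}=G$, so $(\tau_2,\ldots,\tau_{n+1})\in\Sigma(G)$. Now apply autonomy of $G$: this tail is a linear extension of $\poset(G)$. It then remains to check that prepending $x$ yields a linear extension of $\poset(H)$, which follows because $x$ is $\succeq$-maximal among the relations it participates in (every relation in $\poset(H)$ involving $x$ has $x$ on the larger side), so placing $x$ first violates no cover relation, and the relations internal to $V(G)$ are respected precisely because the tail is a linear extension of $\poset(G)=\poset(H)|_{V(G)}$. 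This gives $\Sigma(H)\subseteq\linext(\poset(H))$, completing the equality and proving $x\oplus G$ is autonomous with the same generator structure.

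The main obstacle I anticipate is the bookkeeping needed to verify that $\poset(H)$ restricted to $V(G)$ is genuinely $\poset(G)$ and that the only new relations are those placing $x$ above the down-closure of $\mathcal{L}(G)$ — in other words, that transitive closure of $V$ does not secretly introduce extra comparabilities among the old vertices or fail to make $x$ maximal in the required sense. This is really a statement about how the new first row of $V$ interacts with transitive closure, and I would handle it by arguing directly from the block structure of $V$: since the new column (the first column) of $V$ is zero below the diagonal, no old vertex points into $x$, so $x$ is a source in $D$ and hence $\succeq$-maximal among its relations, and since the lower-right block is exactly $U$, the induced sub-DAG on $V(G)$ has the same transitive closure as before. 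Once this structural claim is secured, the inclusion argument via $H^{(x)}=G$ and autonomy of $G$ is routine.
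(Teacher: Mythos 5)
Your proposal is correct and follows essentially the same route as the paper: since $\mathcal{L}(x\oplus G)=\{x\}$ forces $x$ to be the unique initial press and $H^{(x)}=G$, the pressing sequences of $H$ are exactly $x$ prepended to those of $G$, and the instructional poset is $\poset(G)$ with $x$ adjoined on top. The paper states this more tersely, while you additionally verify via the block structure of the Cholesky root that no spurious comparabilities arise; that extra care is harmless and fills in a detail the paper leaves implicit.
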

\begin{proof}
Let $H=x\oplus G$.  Since $\mathcal{L}(H)=\{x\}$ we have only one candidate vertex for an initial press.   Furthermore, by Observation \ref{left append observation}, $H^{(x)}=G$.  It follows that any pressing sequence must start with $x$ and then continue as a pressing sequence for $G$.  Therefore, the only instructional poset of $H$ is that of $G$ with a maximum element $x$ appended.  This demonstrates that $H$ is also autonomous.  
\end{proof}

\begin{lemma}
If $G$ is autonomous then so is $G\oplus x$.
\end{lemma}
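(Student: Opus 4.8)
The plan is to prove the single structural fact that every successful pressing sequence of $G\oplus x$ ends with $x$ and restricts on $V(G)$ to a successful pressing sequence of $G$; that is, writing $n=|V(G)|$,
$\Sigma(G\oplus x)=\{(\sigma_1,\dots,\sigma_n,x):(\sigma_1,\dots,\sigma_n)\in\Sigma(G)\}$. Granting this, the theorem falls out quickly. By Observation~\ref{right append observation} the instructional Cholesky root $V$ of $G\oplus x$ is upper-triangular with unit diagonal, so $G\oplus x$ is full-rank, and the digraph of $V$ is the digraph of $U$ together with a new sink $x$ that receives an edge from every other vertex and emits none. Hence $\poset(G\oplus x)$ is $\poset(G)$ with $x$ adjoined as a global minimum, and $\linext(\poset(G\oplus x))$ is exactly the set of linear extensions of $\poset(G)$ with $x$ appended. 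Since $G$ is autonomous, $\Sigma(G)=\linext(\poset(G))$, and the structural claim then gives $\Sigma(G\oplus x)=\linext(\poset(G\oplus x))$, which is autonomy of $G\oplus x$.

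The engine is an invariant proved by induction on the number of presses: at every stage prior to pressing $x$, the set $N(x)\cap V(G)$ equals the set $L$ of currently looped vertices of the $G$-part. This holds initially because $x$ is joined precisely to $\mathcal{L}(G)$. For the inductive step, if the invariant holds and we press some $v\in V(G)$, then $v$ is looped, so $v\in L=N(x)\cap V(G)$, whence $x\in N(v)$; pressing $v$ therefore toggles the loop at $x$ and toggles exactly the edges from $x$ to the current $V(G)$-neighborhood $N(v)\cap V(G)$ of $v$. Thus $N(x)\cap V(G)$ changes by symmetric difference with $N(v)\cap V(G)$, which is precisely how the looped set $L$ of the $G$-part changes when $v$ is pressed. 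In particular $x$ never alters edges internal to $V(G)$, so after pressing $\sigma_1,\dots,\sigma_i$ the $G$-part of $G\oplus x$ equals $G_{(\sigma_1,\dots,\sigma_i)}$, with matching loop — hence matching pressability — conditions.

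The crux, and the step I expect to be the main obstacle, is showing that $x$ must be pressed last. Because $V$ is invertible, $G\oplus x$ is full-rank, so every successful pressing sequence presses all $n+1$ vertices. Suppose toward a contradiction that $x$ is pressed while some vertices of $V(G)$ remain unpressed, with current looped set $L$. By the invariant $N(x)\cap V(G)=L$, and since $x$ is looped, pressing $x$ complements $G[L\cup\{x\}]$, which removes the loop at every vertex of $L$; as all other $G$-part loops are already absent, the $G$-part becomes \emph{loopless}. But unpressed vertices remain, no further press is possible, and the empty graph has not been reached (a full-rank graph requires all $n+1$ presses), so $\tau$ could not have been successful — a contradiction. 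Hence $x$ is pressed last; by the invariant the preceding $n$ presses are valid presses of $G$, and since pressing is Gaussian elimination each valid press drops the $\mathbb{F}_2$-rank by exactly one, so $n$ presses reduce the rank-$n$ graph $G$ to the empty graph and $(\sigma_1,\dots,\sigma_n)\in\Sigma(G)$. The reverse inclusion is the forward reading of the same invariant: any $\sigma\in\Sigma(G)$ drives the $G$-part of $G\oplus x$ to empty through valid presses, leaving $x$ isolated and looped (its loop parity works out after $n$ presses), and a final press of $x$ empties the graph.

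Assembling these pieces establishes the structural claim, and with the poset description from Observation~\ref{right append observation} the autonomy of $G\oplus x$ reduces, as above, to that of $G$. The only genuinely delicate points are the loop bookkeeping in the invariant and the observation that pressing $x$ prematurely strips every available loop; once these are in hand, full-rankness forces $x$ to the end and the rest is routine.
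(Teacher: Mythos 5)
Your proof is correct, but it takes a genuinely different route from the paper's. The paper argues by induction on $|V(G)|$: it shows, via Lemma~\ref{pressifloopedcomponents} and a component-tracking argument, that the first press of $H=G\oplus x$ cannot be $x$ and cannot be a looped vertex $j$ that fails to start a pressing sequence of $G$ (in that case the loopless component of $G^{(j)}$ survives intact in $H^{(j)}$), and then recurses through the identity $H^{(j)}=G^{(j)}\oplus x$. You replace the structural induction with a single invariant of the pressing process --- $N(x)\cap V(G)$ always equals the looped set of the $G$-part --- which, combined with the rank count forcing all $n+1$ vertices to be pressed, pins $x$ to the last position in one stroke. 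What your approach buys is a stronger and cleaner intermediate statement, $\Sigma(G\oplus x)=\{\sigma x:\sigma\in\Sigma(G)\}$ for \emph{every} full-rank order-pressable $G$, with autonomy invoked only in the final assembly; the paper's recursion instead uses the autonomy of $G$ (via maximality in $\poset(G)$) at every level, in exchange for localizing all bookkeeping to a single press and leaning on an already-proved lemma rather than a new invariant. The only places you are terse are the parity of the loop at $x$ after $n$ presses (each press of a looped $v$ toggles the loop at $x$ because $v\in N(x)$ by your invariant, and the parity convention in the definition of $G\oplus x$ is exactly what makes $x$ end up looped) and the fact that each valid press drops the $\mathbb{F}_2$-rank by one; both are one-line fills and neither is a gap.
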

\begin{proof}
If $|V(G)|=1$  and $G$ is order-pressable then $G$ is the graph on a single looped vertex and $G\oplus x$ is the graph with one looped vertex, one unlooped vertex and an edge between them; both of these graphs are uniquely pressable and therefore autonomous.  Assume now towards an inductive argument that $|V(G)|>1$ and that the inductive hypothesis holds for $|V(G)|-1$. Let $G=([n],E)$ and $H=G\oplus x$. 
By Observation \ref{right append observation}, every pressing sequence of $G$ can be extended to a pressing sequence for $H$ by appending $x$ to the end of the sequence. Therefore, we need only show that $|\Sigma(H)|=|\Sigma(G)|$ to conclude that $H$ generates only one poset, namely, $\poset(G)$ with the addition of a minimal element $x$.
Since $N_H(x)=\mathcal{L}(G)$, the result of pressing $x$ (should it be looped) in $H$ would be a loopless graph --  by Lemma \ref{pressifloopedcomponents} such a graph cannot be successfully pressed.   Thus, every successful pressing sequence for $H$ must begin with some element of $\mathcal{L}(H)\setminus\{x\}=\mathcal{L}(G)$.  Choose and fix $j \in \mathcal{L}(G)$ that is the initial vertex in a successful pressing sequence for $H$.  Assume, by way of contradiction, that $j$ is not maximal in $\poset(G)$.  It follows that no successful pressing sequence for $G$ begins with $j$, hence (by Lemma \ref{pressifloopedcomponents}) $G^{(j)}$ contains a loopless component on two or more vertices; call this component $C$.  

Consider now the result of pressing $j$ in $H$.  Since $$N_H(j)\cap V(C)= \mathcal{L}(G)\cap V(C)= N_H(x)\cap V(C),$$ we have that every edge from $x$ to $V(C)$ is deleted upon pressing $j$ and $V(C)$ is a set of unlooped vertices in $H^{(j)}$. Finally, observe that any vertex that is incident to $x$ in $H^{(j)}$ must be in a different component than $C$, as it was in $G$.   It follows that $H^{(j)}$ contains a non-trivial loopless component, contradicting that $j$ was the beginning of a successful pressing sequence.  Thus, the initial presses of $H$ are those of $G$. Observing that $H^{(j)}=G^{(j)}\oplus x$ the result follows from the inductive hypothesis.

\end{proof}

\begin{lemma}\label{remove min of poset}\label{autonomous are down closed} If $\cP$ is an autonomous poset and $k$ is a minimal element, then $\cP-k$ is also an autonomous poset. Furthermore, if $\mathfrak{S}(G)=\{\cP\}$ then $\mathfrak{S}(G-k)=\{\cP-k\}$.
\end{lemma}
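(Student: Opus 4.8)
The plan is to reduce everything to a single observation about the instructional Cholesky root: deleting a minimal element of the poset corresponds to deleting the matching row and column of the root. First I would fix an autonomous generator $G$ of $\cP$ and relabel its vertices so that the generating order is the identity, i.e.\ $\poset(G)=\cP$; write $A=\mathrm{adj}(G)$ and let $U$ be its instructional Cholesky root. Since $k$ is minimal in $\cP$, it has no out-edges in the digraph with adjacency matrix $U$, so row $k$ of $U$ is the standard basis vector $e_k$. Writing $S=V(G)\setminus\{k\}$ and expanding $A=U^{T}U$ entrywise, the cross term $U[k,i]U[k,j]$ vanishes for all $i,j\in S$, whence $A[S,S]=U[S,S]^{T}U[S,S]$ with $U[S,S]$ upper triangular and unit-diagonal. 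By uniqueness of the instructional Cholesky root, $U[S,S]$ is therefore the instructional root of $G-k$; in particular $G-k$ is full rank. Because $k$ is a sink, no directed path between two vertices of $S$ passes through $k$, so the transitive closure of the digraph of $U[S,S]$ equals the transitive closure of that of $U$ restricted to $S$; that is, $\poset(G-k)=\cP-k$, and hence $\cP-k\in\mathfrak{S}(G-k)$.

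Next I would transfer pressing-sequence validity between $G$ and $G-k$ using the leading-principal-minor criterion of Lemma \ref{pressifLPN}. For any ordering $\tau'$ of $S$, consider the ordering $(\tau',k)$ of $V(G)$ that appends $k$ last. The leading principal minors of $A$ reordered by $(\tau',k)$ of size at most $|S|$ coincide with the leading principal minors of $\mathrm{adj}(G-k)$ reordered by $\tau'$, while the top minor is $\det A\neq 0$; thus $(\tau',k)\in\Sigma(G)$ if and only if $\tau'\in\Sigma(G-k)$. Since $G$ is autonomous we have $\Sigma(G)=\linext(\cP)$, and because $k$ is minimal the extensions of $\cP$ ending in $k$ are precisely $\{(\tau',k):\tau'\in\linext(\cP-k)\}$. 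Combining these, $\Sigma(G-k)=\linext(\cP-k)$. Together with $\poset(G-k)=\cP-k$ this shows $G-k$ is an autonomous generator of $\cP-k$, proving the first assertion.

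For the second assertion I would strengthen the hypothesis to $\mathfrak{S}(G)=\{\cP\}$ and run the same argument in every pressing order. Given an arbitrary $\tau'\in\Sigma(G-k)$, the minor correspondence of the previous paragraph gives $(\tau',k)\in\Sigma(G)$, and since $G$ generates only $\cP$ we get $\poset(G,(\tau',k))=\cP$. Now $k$ occupies the last position of this order, so it is a sink of the digraph of the corresponding instructional root, and the sink/submatrix identity of the first paragraph—applied in the order $(\tau',k)$—yields $\poset(G-k,\tau')=\poset(G,(\tau',k))-k=\cP-k$. As $\tau'$ was arbitrary, every instructional poset of $G-k$ equals $\cP-k$, i.e.\ $\mathfrak{S}(G-k)=\{\cP-k\}$.

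The one genuinely load-bearing step is the identity $A[S,S]=U[S,S]^{T}U[S,S]$: it is what lets the unique Cholesky root commute with deletion, and it holds only because minimality forces row $k$ of $U$ to be $e_k$ so that the cross term drops out. Everything else is bookkeeping—matching leading principal minors via Lemma \ref{pressifLPN} and observing that a sink can be deleted from a digraph without altering reachability among the remaining vertices. I expect the main care to be in keeping the two notions of \emph{minimal} aligned (minimal in $\cP$ versus occupying the final position of a pressing order) when passing between the fixed generating order used for the first assertion and the arbitrary orders $(\tau',k)$ used for the second.
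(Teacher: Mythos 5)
Your proof is correct and follows essentially the same route as the paper's: you identify the instructional Cholesky root of $G-k$ as the submatrix of $U$ obtained by deleting the row and column of the minimal element, and you transfer pressing sequences between $G$ and $G-k$ via the leading-principal-minor criterion of Lemma \ref{pressifLPN} by appending $k$ to the end of each order. If anything, your treatment of the second assertion (running the sink/submatrix identity in an arbitrary order $(\tau',k)$ to pin down every instructional poset of $G-k$) is spelled out more explicitly than in the paper, which relabels $k$ to the last position and leaves that step largely implicit.
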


\begin{proof}
Let $\cP$ is an autonomous poset on $n$ elements.  By relabeling, we may assume that the elements of $\cP$ are the integer set $[n]=\{1,2,\ldots, n\}$, so that $(1,2,\ldots, n)$ is a linear extension of $\cP$. By relabeling the minimal elements, we may assume the element we remove is $n$.

Let $G=([n],E)$ such that $G$ generates only $\cP$. Let $A$ be the adjacency matrix of $G$. By Lemma \ref{pressifLPN} and the fact that $\mathfrak{S}(G)=\{\cP\}$, for any permutation matrix $P$ we have that $P^TAP$ has all non-singular leading principal minors (i.e., is LPN) if and only if $P$ encodes a linear extension of $\cP$.
Let $A'$ denote the $(n-1)\times (n-1)$ leading principal submatrix of $A$.  Choose and fix an $(n-1)\times(n-1)$ permutation matrix $P'$.

Suppose  $P'^TA'P'$ is LPN.  Then $$\left[\begin{array}{c|c}
P' & 0\\ \hline 0 & 1\\
\end{array}\right]^T\left[\begin{array}{c|c}
A' & *\\ \hline * & a\\
\end{array}\right]\left[\begin{array}{c|c}
P' & 0\\ \hline 0 & 1\\
\end{array}\right]=\left[\begin{array}{c|c}
P'^TA'P' & *\\ \hline * & a\\
\end{array}\right]$$ is LPN  if and only if $\left[\begin{array}{c|c}
P'^TA'P' & *\\ \hline * & a\\
\end{array}\right]$ is invertible, which occurs if and only if $\left[\begin{array}{c|c}
A' & *\\ \hline * & a\\
\end{array}\right]$ is invertible.  Since $A$ is invertible, we may conclude that if $P'^TA'P'$ is LPN then $$\left[\begin{array}{c|c}
P' & 0\\ \hline 0 & 1\\
\end{array}\right]^T \cdot A \cdot \left[\begin{array}{c|c}
P' & 0\\ \hline 0 & 1\\
\end{array}\right]$$ is LPN.  It follows that every successful pressing sequence for a graph $G'$ with adjacency matrix $A'$ can be extended to a successful pressing sequence for $G$ by appending $n$ to the end of the sequence.  Furthermore, the instructional Cholesky root of $A'$ is the $(n-1)\times (n-1)$ leading principal submatrix of $A$; hence $G'$, the graph whose adjacency matrix is $A'$, generates $\mathcal{P}-n$.
\end{proof}

\begin{lemma}\label{remove max of poset}\label{autonomous are up closed} If $\cP$ is an autonomous poset and $k$ is a maximal element of $\cP$, then $\cP-k$ is also an autonomous poset.
\end{lemma}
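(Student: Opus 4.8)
The plan is to dualize the proof of Lemma \ref{remove min of poset}: whereas a minimal element corresponds to the \emph{last} vertex pressed (the last row and column of the adjacency matrix, handled there by a leading principal submatrix), a maximal element corresponds to the \emph{first} vertex pressed, that is, to a source of the instructional DAG, and the natural operation is no longer vertex deletion but the map $G\mapsto G^{(k)}$. So I would realize $\cP-k$ as $\poset(G^{(k)})$ for a carefully chosen generator and then verify autonomy directly.

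First I would fix a generator $G$ of $\cP$ with $\mathfrak{S}(G)=\{\cP\}$, exactly as in the proof of Lemma \ref{remove min of poset}. Since $k$ is maximal and $\Sigma(G)=\linext(\cP)$, some successful pressing sequence $\sigma^{\ast}$ begins with $k$; because $\mathfrak{S}(G)=\{\cP\}$ we have $\poset(G,\sigma^{\ast})=\cP$, so after relabeling by $\sigma^{\ast}$ I may assume $V(G)=[n]$, the natural order is a linear extension of $\cP$ with $k=1$, and $1$ is looped (being pressable first). Writing $A=\mathrm{adj}(G)$ and letting $U$ be its instructional Cholesky root, the root then has block form $U=\left[\begin{smallmatrix}1 & u^{T}\\ 0 & U'\end{smallmatrix}\right]$, where $u$ is the restriction of the first row of $A$ to columns $2,\dots,n$, i.e.\ the indicator of $N(1)\setminus\{1\}$.

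The key structural step is to identify $\poset(G^{(1)})$ with $\cP-1$. Expanding $A=U^{T}U$ in blocks gives $U'^{T}U'=B+uu^{T}$ over $\mathbb{F}_{2}$, where $B$ is the trailing $(n-1)\times(n-1)$ block of $A$; but $B+uu^{T}$ is precisely $\mathrm{adj}(G^{(1)})$, since pressing $1$ toggles exactly the edges and loops within $N(1)$ (contributing $uu^{T}$) before $1$ is deleted. As $U'$ is upper triangular with unit diagonal, uniqueness of the instructional Cholesky root forces $U'$ to be the instructional root of $G^{(1)}$; in particular $G^{(1)}$ is full-rank and order-pressable. Moreover $U'$ is the adjacency matrix of the DAG of $U$ restricted to $\{2,\dots,n\}$, and since vertex $1$ is a source of that DAG (its column is $e_{1}$, so no edge enters $1$) it lies on no directed path between other vertices; hence removing it commutes with transitive closure, giving $\poset(G^{(1)})=\cP-1=\cP-k$.

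It remains to verify autonomy, which I expect to be the routine part. Pressing $1$ first in $G$ is legal and leaves $1$ isolated with the remaining graph equal to $G^{(1)}$, so $\sigma'\in\Sigma(G^{(1)})$ if and only if $(1,\sigma')\in\Sigma(G)$. Combining $\Sigma(G)=\linext(\cP)$ with the maximality of $1$ — a maximal element may always be placed first, and the condition on the tail is exactly that it be a linear extension of $\cP-1$ — yields $(1,\sigma')\in\linext(\cP)$ iff $\sigma'\in\linext(\cP-1)$. Chaining these equivalences gives $\Sigma(G^{(1)})=\linext(\cP-1)=\linext(\poset(G^{(1)}))$, so $G^{(1)}$ is an autonomous graph generating $\cP-k$, proving that $\cP-k$ is autonomous. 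The main obstacle is the structural identification of the third paragraph: one must correctly match the press-and-delete operation with the trailing block of the Cholesky factorization and invoke the source property so that the transitive closure of the remaining vertices is undisturbed; once that is in hand, the autonomy bookkeeping follows formally.
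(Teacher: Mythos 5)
Your proof is correct and takes essentially the same route as the paper's: press the maximal element $k$ first, identify the instructional Cholesky root of $G^{(k)}$ with the trailing principal submatrix of that of $G$, conclude $G^{(k)}$ generates $\cP-k$, and use the left-append correspondence between $\Sigma\left(G^{(k)}\right)$ and the pressing sequences of $G$ beginning with $k$. You additionally supply details the paper leaves implicit (the block computation $U'^{T}U'=B+uu^{T}$ establishing the trailing-submatrix claim, and the source argument showing deletion of $k$ commutes with transitive closure), and your set-level identity $\Sigma\left(G^{(k)}\right)=\linext(\cP-k)$ is a cleaner justification than the paper's cardinality statement.
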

\begin{proof}
Suppose $\cP$ is autonomous and $G$ is an OSP-graph such that $\mathfrak{S}(G)=\{\cP\}$.  
Let $U$ be the $n\times n$ intructional Cholesky root  of $G$.  Then the intructional Cholesky root of $G^{(1)}$ is the $(n-1)\times (n-1)$ trailing principal submatrix of $U$.  Thus, $G^{(1)}$ is a generator of $\cP-k$.  However, every successful pressing sequence of $G^{(1)}$ can be left-appended by $k$ to obtain a successful pressing sequences for $G$.  Hence, $\left|\Sigma\left(G^{(1)}\right)\right|=\left|\Sigma\left(G\right)\right|$, so that $\cP-k$ is the only poset generated by $G^{(1)}$.
\end{proof}

\begin{lemma}\label{max with leaf} Let $\cP$ be an autonomous poset on $n\geq 3$ elements. If $\cP$ has a maximum element $x$ and a minimal element $z$ such that $x$ covers $z$, then any graph $G$ that generates only $\cP$ must satisfy $|\mathcal{L}(G)|=1$.
\end{lemma}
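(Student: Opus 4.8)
The plan is to prove the statement by induction on $n$, isolating the one easy fact and then reducing everything to a single stubborn case. The easy fact is that $x$ itself must be looped: because $\mathfrak{S}(G)=\{\cP\}$, Lemma~\ref{linear extensions are pressing sequences} gives $\Sigma(G)=\linext(\cP)$, and since $x$ is the maximum of $\cP$ it is the first entry of every linear extension, hence the first vertex pressed in every successful pressing sequence. As the first pressed vertex is necessarily looped, $x\in\mathcal{L}(G)$. Everything that follows is devoted to showing that no other vertex is looped.

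Next I would record the structure forced by the covering relation $x\succ z$. Order $V(G)$ by a linear extension beginning with $x$ and let $U$ be the instructional Cholesky root. Since $x$ is the maximum and covers $z$, the only element of $\cP$ strictly above $z$ is $x$ (any $w$ with $z\prec w\prec x$ would violate the covering relation), so in the DAG with adjacency matrix $U$ the sole possible arc into $z$ comes from $x$, and that arc must be present, for otherwise $z$ is unreachable from $x$, contradicting $x\succ z$. Thus column $z$ of $U$ has exactly two ones and $[U^TU]_{z,z}=0$, i.e. $z$ is \emph{not} looped; a short computation with $U^TU$ further shows that $x$ and $z$ are twins (one has $xz\in E(G)$ and identical neighborhoods outside $\{x,z\}$). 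This is the ``leaf'' of the statement and the structural engine of the argument.

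For the induction, the base case $n=3$ is immediate: the only poset with a maximum covering a minimal element is the ``V'' (a maximum over two incomparable minimal elements), whose unique generator is checked by hand to have a single loop. For the inductive step I would first observe that $\cP$ has at least two minimal elements, since if $z$ were the only one, any third element $y$ would satisfy $z\prec y\prec x$, contradicting that $x$ covers $z$. Choose a minimal element $z'\neq z$. By Lemma~\ref{autonomous are down closed}, $\cP-z'$ is autonomous, still has maximum $x$ covering the minimal $z$, has $n-1\geq 3$ elements, and $G-z'$ generates only $\cP-z'$; the inductive hypothesis then yields $\mathcal{L}(G-z')=\{x\}$, whence $\mathcal{L}(G)\subseteq\{x,z'\}$. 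If $\cP$ has a third minimal element, running this with two different choices of the removed element and intersecting gives $\mathcal{L}(G)=\{x\}$; and if $x$ happens to cover $z'$ as well, then the only element above $z'$ is again $x$, so the twin argument of the previous paragraph applies verbatim to $z'$ and shows it is unlooped. In either subcase we are done.

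This leaves exactly one case, which I expect to be the main obstacle: $\cP$ has precisely two minimal elements $z$ and $z'$, with $z'$ covered by some $m\neq x$ (so removing $z$ would not preserve the hypothesis, and the previous paragraph does not close the case). I must rule out $z'$ being looped. The plan is to argue by contradiction: assuming $z'$ is looped, I would show that $G^{(z')}$ is itself order-pressable, so that some successful pressing sequence of $G$ begins with the non-maximal vertex $z'$; such a sequence cannot be a linear extension of $\cP$, contradicting $\Sigma(G)=\linext(\cP)$. The crux is precisely the pressability of $G^{(z')}$, i.e. showing via Lemma~\ref{pressifloopedcomponents} that pressing the looped minimal $z'$ cannot create a nontrivial loopless component. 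Here the twin/pendant structure is the tool: the looped maximum $x$ should anchor a loop in the surviving component, and the fact that $x$ and $z$ are twins controls how pressing $z'$ interacts with the top of the poset. Making this anchoring precise, ruling out every stray loopless component after the press, is the step I expect to require the most care.
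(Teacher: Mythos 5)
Your skeleton is essentially the paper's: induction on $n$, with the base case $n=3$ checked by hand, the inductive step removing a minimal element $y\neq z$ (your $z'$) so that Lemma~\ref{remove min of poset} and the inductive hypothesis give $\mathcal{L}(G)\subseteq\{x,y\}$, and the structural fact that $z$ is an unlooped twin of $x$ (equivalently $N_G(z)=N_G(x)$, which the paper gets from $z$ being an isolated looped vertex of $G^{(x)}$) serving as the engine for ruling out the second loop. Your preliminary observations are all correct, including the existence of a second minimal element and the ``intersect over two choices of removed element'' shortcut when there are three or more minimal elements.

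However, there is a genuine gap: the case you yourself flag as ``the main obstacle'' is exactly where all the work of the lemma lives, and you leave it as a plan rather than a proof. Moreover, your plan to ``show $G^{(z')}$ is order-pressable'' is not quite the right single target: the paper's argument branches. Writing $y$ for your $z'$ and supposing $y\in\mathcal{L}(G)$, the paper first shows $xy\in E(G)$ (otherwise $G^{(y)}$ is pressable because $x$ stays looped and anchors its component while the complemented clique on $N_G(y)$ supplies loops elsewhere), then uses $N_G(z)=N_G(x)$ to get $yz\in E(G)$, and then runs a case analysis on the symmetric difference of neighborhoods: if $N_G(x)\setminus N_G(y)\neq\emptyset$ or $N_G(y)\setminus N_G(x)\neq\emptyset$ one shows component-by-component that $G^{(y)}$ is pressable (contradiction), and in the residual case $N_G(x)=N_G(y)$ the contradiction comes not from pressability of $G^{(y)}$ per se but from the observation that $x$ and $y$ are interchangeable in every successful pressing sequence, which violates $\Sigma(G)=\linext(\cP)$ since $y$ is not maximal. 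None of this component bookkeeping is in your writeup, and it is not routine --- it is the content of the lemma. Note also that your remark that ``$x$ covers $z'$ implies the only element above $z'$ is $x$'' is correct (because $x$ is the maximum), but the paper does not need to split on whether $x$ covers $y$: its neighborhood analysis covers all sub-cases uniformly, which is the cleaner way to close the argument you have set up.
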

\begin{proof}
By assumption that $x$ is maximum we have that $\cP$ is connected; therefore, if $y \in \cP\setminus\{x,z\}$, then $x\succ y$ and $y$ is incomparable to $z$. Suppose first that $n=3$, whence $\cP=(\{x,y,z\}, \preceq)$ with $x$ covering both $y$ and $z$.  If $G$ is an OSP-graph that generates $\cP$ then the adjacency matrix $A$ of $G$ must have an instructional Cholesky root $U$ encoding the cover relations of $\cP$.  Hence $$U=\begin{bmatrix}1 & 1 &1 \\0 &1&0\\0&0&1 \end{bmatrix} \quad \textrm{and so}\quad A=U^TU=\begin{bmatrix}1 & 1 &1 \\1 &0&1\\1&1&0 \end{bmatrix}.$$  

As the result holds for $n=3$, we proceed by induction on $n\geq 4$. Choose a minimal element $y \in \cP\setminus\{x,z\}$, let $\cP'=\cP-y$, and let $G'=G-y$.

By Lemma \ref{remove min of poset}, $\cP'$ is autonomous and  $\mathfrak{S}(G')=\{\cP'\}$.  Furthermore, $\cP'$ has a maximum element $x$ and a minimal element $z$ such that $x$ covers $z$, so we may apply the inductive hypothesis; $|\mathcal{L}(G')|=1$, in particular, $\mathcal{L}(G')=\{x\}$ (since it must be a pressable vertex). 
It follows that $\mathcal{L}(G)\subseteq \{x,y\}$. Assume, by way of contradiction, that $y\in \mathcal{L}(G)$.  If $xy\notin E(G)$ then pressing $y$ would create a looped vertex in every component of $G^{(y)}$, therefore there is a pressing sequence that begins with $y$, contradicting that $\cP$ is autonomous. Thus, we must conclude that $xy\in E(G)$.  Since $z$ is a minimal element covered by $x$, then $z$ is an isolated looped vertex in $G^{(x)}$ and hence $N_G(z)= N_G(x)$.  In particular, $yz\in E(G)$.  

Let $S=N_G(x)\setminus N_G(y)$ and $T=N_G(y) $.  Assume, towards a contradiction, that $S\neq \emptyset$.  Observe that $sx, sz\in E\left(G^{(y)}\right)$ for all $s\in S$ and hence there is a connected component in $G^{(y)}$ containing $x$ and $z$ (as well as the elements of $S$), and $z$ is looped in $G^{(y)}$.   Every other connected component in $G^{(y)}$ was created by deleting an edge between the vertices of $T$ and hence contains an element of $T$ which is now looped. It follows that $G^{(y)}$ can be successfully pressed, which is a contradiction.  Thus, we may proceed under the assumption that $S=\emptyset$.

If $v\in N_G(y)\setminus N_G(x)$ then $v$ is looped in $G^{(y)}$, $xy\in E\left(G^{(y)}\right)$, and every other connected component in $G^{(y)}$ was created by deleting the edge between two unlooped vertices and therefore would contain a looped vertex.  It follows that $N_G(y)= N_G(x)$, therefore $x$ and $y$ can be interchanged in any successful pressing sequence.  This contradicts that $G$ is autonomous, so  we must conclude that $y\notin \mathcal{L}(G)$, as desired.
\end{proof}

\begin{definition}
Let $\cP=(X,\preceq)$ be a poset.  We say $(a,b,c,d)$ is an \emph{occurrence of the pattern $N$} in $\cP$ if $\{a,b,c,d\}\subseteq X$ and $a\succ c$, $a\succ d$, and $b\succ d$.  We say $(a,b,c,d)$ is an \emph{induced occurrence} of the pattern $N$ in $\cP$ if $a\succ c$, $a\succ d$, $b\succ d$ and otherwise $a,b,c$ and $d$ are pairwise incomparable. 

We say $(a,b,c,d)$ is an \emph{occurrence of the pattern bowtie} in $\cP$ if $\{a,b,c,d\}\subseteq X$ and $a\succ c$, $a\succ d$, $b\succ c$, and $b\succ d$.  We say $(a,b,c,d)$ is an \emph{induced occurrence} of the pattern bowtie in $\cP$ if $a\succ c$, $a\succ d$, $b\succ d$ and otherwise $a,b,c$ and $d$ are pairwise incomparable. 

We say $\cP$ is \emph{induced $N$-free} if it contains no induced occurrences of the pattern $N$. Similarly, $\cP$ is \emph{induced bowtie-free} if it contains no induced occurrences of the pattern bowtie.
\end{definition}
It is worth noting that the literature varies on the definitions of ``$N$-free poset''.  In our terminology a poset may include an occurrence of the pattern $N$ yet be induced $N$- and bowtie-free.  Such an example is the poset $\cP=([4],\{1\succ 2\succ 3\succ 4\})$.

\begin{lemma}\label{N not an autonomous} Autonomous posets are induced $N$-free.
\end{lemma}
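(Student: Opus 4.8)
The plan is to argue by contradiction and pass to a minimal counterexample. Suppose $\cP$ is autonomous and contains an induced $N$, witnessed by $(a,b,c,d)$ with $a\succ c$, $a\succ d$, $b\succ d$ and all remaining pairs incomparable; among all such $\cP$ take one of minimum size. Since autonomy is preserved under deleting a minimal or a maximal element (Lemmas \ref{remove min of poset} and \ref{remove max of poset}), and deleting any element outside $\{a,b,c,d\}$ leaves the induced $N$ intact, minimality forces every minimal element of $\cP$ to lie in $\{c,d\}$ and every maximal element in $\{a,b\}$; as $a\succ c$ and $b\succ d$ show that $a,b$ are not minimal and $c,d$ not maximal, the minimal elements are exactly $\{c,d\}$ and the maximal elements exactly $\{a,b\}$. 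The same deletions make $\cP$ ``$N$-critical'': removing any one of $a,b,c,d$ destroys every induced $N$. I would \emph{not} expect to reduce all the way to the four-element poset on $\{a,b,c,d\}$, since a chain such as $a\succ e\succ d$ creates an intermediate $e$ lying inside the convex hull of $\{a,b,c,d\}$ that cannot be deleted by the closure lemmas and belongs to no induced $N$; coping with these intermediates is exactly what rules out a naive base-case reduction.

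Next I would fix an autonomous generator $G$ with $\mathfrak{S}(G)=\{\cP\}$ and pass to linear algebra. By Lemma \ref{pressifLPN}, together with $\Sigma(G)=\linext(\cP)$ (Lemma \ref{linear extensions are pressing sequences}), an ordering $\tau$ is a successful pressing sequence of $G$ if and only if the reordered adjacency matrix $A_\tau$ has all nonzero leading principal minors, and this in turn happens if and only if $\tau$ is a linear extension of $\cP$. The goal is therefore to exhibit a single ordering $\tau\notin\linext(\cP)$ for which $A_\tau$ is LPN: any such $\tau$ is a successful pressing sequence generating a poset other than $\cP$, contradicting $\mathfrak{S}(G)=\{\cP\}$ and hence the autonomy of $\cP$.

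The rogue ordering should exploit the defining asymmetry of the induced $N$: the minimal element $d$ lies below \emph{both} maximal elements $a$ and $b$, whereas $c$ lies below only $a$. Writing $A=U^{T}U$ for the instructional Cholesky root $U$ and using that prefixes of linear extensions are filters, one checks (via Cauchy--Binet over $\mathbb{F}_2$) that merely transposing an adjacent pair $u\succ d$ with $u$ covering $d$ always produces a zero principal minor: the rows of $U$ contributed by $u$ and by $d$ coincide, giving an even count, so a single adjacent swap never succeeds. Instead I would advance $d$ past \emph{both} of its maximal superiors --- press a suitable initial segment of maximal elements, which toggles the loop at $d$ each time a pressed superior is adjacent to $d$, and then press $d$ while one of its superiors is still unpressed. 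Because $d$ had two superiors, the component containing $d$ after these presses should still carry a looped vertex, so by Lemma \ref{pressifloopedcomponents} (exactly as in the proof of Lemma \ref{max with leaf}) the remainder can still be pressed to completion, yielding a $\tau$ that presses $d$ out of turn and is not a linear extension. In the cleanest illustrative case, the four-element poset on $\{a,b,c,d\}$, one verifies directly that it is not autonomous (for instance, some generator has all trailing principal minors nonzero, so the reversal of a linear extension is a successful pressing sequence).

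The main obstacle is making this construction \emph{uniform} across all generators and in the presence of the intermediate chain elements above. Two things must be controlled at once: the parity of the loop at $d$ as maximal elements are pressed (so $d$ becomes pressable at the right moment), and the component structure after pressing $d$ (so that no nontrivial loopless component is created, which by Lemma \ref{pressifloopedcomponents} would abort the sequence). Equivalently, in the matrix formulation, one must show that the several leading principal minors of $A_\tau$ affected by moving $d$ above both $a$ and $b$ are simultaneously nonzero; the single-transposition computation localizes the delicate point to the parity contributed by $d$'s two superiors, and I expect the confluence ($d$ below two maxima, $c$ below one) to force precisely the parity needed. Carrying out this minor/parity bookkeeping in full generality, rather than only in the small explicit cases, is the heart of the argument.
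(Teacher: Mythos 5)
Your opening reduction matches the paper's: pass to a minimal counterexample so that the maximal elements are exactly $\{a,b\}$ and the minimal elements exactly $\{c,d\}$, and your remark that intermediate chain elements block a naive reduction to the four-element $N$ is correct. Your endgame is also the right one in spirit --- the paper derives its contradiction by showing that the doubly dominated minimal element ($d$ in your notation, $z$ in the paper's) can be pressed out of turn. But the mechanism by which this is established is entirely absent from your proposal, and you concede as much (``Carrying out this minor/parity bookkeeping in full generality \ldots is the heart of the argument''). That heart is exactly what the paper supplies, and not by minor computations. The paper first sharpens the reduction so that $\cP$ contains \emph{exactly one} induced occurrence of $N$, and from that uniqueness deduces the cover relations ``$a$ covers $y$'' and ``$b$ covers $z$.'' It then applies Lemma \ref{max with leaf} to $G^{(b)}$ (whose poset $\cP-b$ has maximum $a$ covering the minimal element $y$) to get $\mathcal{L}(G^{(b)})=\{a\}$, hence $\mathcal{L}(G)=\{a,b\}\cup N_G(b)$, and then pins down $N_G(b)=\{b,z\}$ so that $\mathcal{L}(G)=\{a,b,z\}$. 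With the loop set determined, a short component analysis of $G^{(z)}$ (cases $a\notin N_G(z)$ and $a\in N_G(z)$) shows every nontrivial component of $G^{(z)}$ contains a looped vertex, so by Lemma \ref{pressifloopedcomponents} some successful pressing sequence begins with the minimal element $z$ --- the contradiction. None of these steps (the unique-$N$ refinement, the cover relations, the appeal to Lemma \ref{max with leaf}, the determination of $\mathcal{L}(G)$) appears in your sketch, and without them the existence of your rogue ordering $\tau$ is unproven.

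I would also flag the one concrete linear-algebra claim you do commit to: that transposing an adjacent pair $u\succ d$ with $u$ covering $d$ ``always produces a zero principal minor'' because ``the rows of $U$ contributed by $u$ and by $d$ coincide.'' No argument is given that those rows coincide, and the statement cannot hold in the generality in which you invoke it --- for non-autonomous generators there routinely exist successful pressing sequences that violate cover relations of an instructional poset, i.e., reorderings all of whose leading principal minors are nonzero. (For an autonomous $G$ the conclusion is true, but only because it is equivalent to the autonomy you are assuming, so it carries no independent content.) As it stands, the proposal correctly identifies the target statement to contradict but does not contain a proof.
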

\begin{proof}
Let $\cP'$ be an autonomous poset and assume towards a contradiction $(a,b,y,z)$ is an induced occurrence of the pattern $N$ in $\cP'$. Let $\cP=(X,\preceq)$ be the result of iteratively removing maximal and minimal elements from $\cP'$ until $a,b$ are the only maximal elements and $y,z$ are the only minimal elements.  By Lemmas \ref{remove min of poset} and \ref{remove max of poset},  $\cP$ is an autonomous poset with an induced occurrence of the pattern $N$, namely $(a,b,y,z)$.  Observe that if there exists $(a',b',y', z')\neq (a,b,y,z)$ that induces the pattern $N$ in  $\cP$  then we may repeat the process of iteratively removing elements until only $a',b',y',z'$ are extremal elements;  thus, we proceed under the  assumption that  $\cP$ has exactly one induced occurrence of the pattern $N$. 

Choose $x\in \cP$ such that $x\succ y$ (hence $x\neq y$).  By assumption that only $a$ and $b$ are maximal in $\cP$ we have that $a\succeq x$ or $b\succeq x$.  Since $(a,b,y,z)$ is an induced occurrence of the pattern $N$ we have $b\not\succ y$ and hence $b\not\succeq x$, therefore $a\succeq x$.  Observe that if $x\not\succeq z$ then $(a,b,x,z)$ is an induced occurrence of the pattern $N$, contrary to assumption.  Thus, $x \succ z$ (since $x\neq z$) and it follows that $(x,b,y,z)$ is an induced occurrence of the pattern $N$ implying that $x=a$,  therefore $a$ covers $y$.

Now choose $w\in \cP$ such that $b\succ w$, observe that $w\neq a$.  Since $b\not\succ y$ we have $w\not\succeq y$, hence $w\succeq z$.  If $a\succ w\succ z$ then $(a,b,y,w)$ is an induced occurrence of the pattern $N$, contrary to assumption. Hence,  $a\succeq w$ if and only if $w=z$.  However, if $w\neq z$ then $(a,w,y,z)$ is an induced occurrence of the pattern $N$, again contrary to assumption.  Therefore, $w=z$ and it follows that $b$ covers $z$.

By assumption that $\cP$ is autonomous there exists a graph $G$ that generates only $\cP$.  Fix such a $G$.  Since $b\in \cP$ is maximal, there is a successful pressing sequence beginning with $b\in V(G)$; thus $b\in \mathcal{L}(G)$.   A sequence $\sigma'=(\sigma_1, \ldots, \sigma_k)$ is successful in $G^{(b)}$ exactly when $\sigma=(b,\sigma_1, \ldots, \sigma_k)$ is successful in $G$.  Since $G$ generates an autonomous poset then so does $G^{(b)}$ and hence $\cP-b$ is autonomous.  Further $\cP-b$ meets the description of Lemma \ref{max with leaf} so $\mathcal{L}\left(G^{(b)}\right)=\{a\}$,  therefore $\mathcal{L}\left(G\right)=\{a,b\}\cup N_G(b)$.  Now observe that if $v\in N_G(b)$, then pressing $b$ affects $v$ and hence $b\succeq v$.  It follows that $N_G(b)=\{b,z\}$,  therefore   $\mathcal{L}\left(G\right)=\{a,b,z\}$. We proceed to show that $z$ can be pressed in $G$, contradicting that  $\mathfrak{S}(G)=\{\cP\}$

Suppose first that $a\notin N_G(z)$.  Then $N_G(z)\setminus \{b\}\subseteq \mathcal{L}\left(G^{(z)}\right)$ and $bv\in E\left(G^{(z)}\right)$ for all $v\in N_G(z)\setminus \{b\}$.   It follows that any component created by pressing $z$ in $G$ has a looped vertex, and hence there is a successful pressing sequence starting with $z$ in $\Sigma(G)$, a contradiction.  Thus we must conclude that $\{a,b,z\}\subseteq N_G(z)$.  Observe that the only elements comparable to $y$ in $\cP$ are $a$ and $y$ itself.  Thus in any successful pressing sequence of $G$, $a$ must be pressed before $y$ and no other vertex affects (or is affected by) $y$.  Hence $y \notin \mathcal{L}(G)$ and $N_G(y)=N_G(a)\setminus \{y\}$.  Then $\{a,b,y,z\}\subseteq  N_G(z)$.  Since $ab,by\notin E(G)$ we have that $ab,by \in E\left(G^{(z)}\right)$ and hence $a, b$ and $y$ are path connected and $y \in \mathcal{L}\left(G^{(z)}\right)$.   Similarly, if $v\in N_G(z)\setminus \{a,b,y,z\}$ then $bv \in E\left(G^{(z)}\right)$.  It follows that every non-trivial component created by pressing $z$ in $G$ contains a looped vertex, therefore $z$ is the initial press of for some $\sigma \in \Sigma(G)$, a contradiction.

\end{proof}

Before proceeding, we state the main theorem of \cite{cooper2017uniquely}, which will be used below.

\begin{theorem}[{\bf\cite{cooper2017uniquely}, Theorem 1}]\label{uniquelymainthm}
Let $G=([n],E)$ be full rank with instructional Cholesky root $U$. Then $G$ is uniquely pressable (i.e., has exactly one pressing sequence) if and only if $U$ has columns $C_1$, $\ldots$, $C_n$ whose weights (number of nonzero entries) are $w_1$, $\ldots$, $w_n$ respectively, satisfying:
\begin{itemize}
\item For each $j$, if $C_j=(c_{1,j}, c_{2,j}, \ldots, c_{n,j})^T$ then $\begin{cases} c_{i,j}= 1, &  j-w_j < i\leq j\\
c_{i,j}= 0, &  \textrm{ otherwise}\\
\end{cases}$.
\item $1=w_1\leq w_2 \leq \cdots \leq w_n$.
\item $w_i>2$ implies $w_{i+2}>w_i$, for $i\in [n-2]$.
\item If $w_i$ is odd for $i>1$, then $w_j = j$ for all $j \geq i$.
\end{itemize}
\end{theorem}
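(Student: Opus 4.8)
The plan is to recast unique pressability as a statement about which vertex may legally begin a successful pressing sequence at each stage, and then to read the four conditions off from this combinatorial picture. The key tool is the pressing--elimination correspondence: since pressing is symmetric Gaussian elimination on $A=U^TU$ (see \cite{cooper2016successful} and the discussion preceding Lemma \ref{pressifCholesky}), after pressing and removing the vertices $1,\dots,t$ in order the remaining graph $G^{\mathbf{t}}$ has adjacency matrix $U_{>t}^T U_{>t}$, where $U_{>t}$ is the trailing $(n-t)\times(n-t)$ principal submatrix of $U$. In particular the loop on a vertex $m>t$ in $G^{\mathbf{t}}$ is the parity of the number of $1$'s of column $m$ lying in rows $t+1,\dots,m$. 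First I would record two immediate consequences: the diagonal of $U$ is all $1$'s (each vertex must be looped when pressed), and --- once Condition~1 is in force --- this parity equals $\min(w_m,\,m-t)\bmod 2$. Thus under Condition~1 the looped vertices of $G$ itself are exactly the columns of odd weight, and $w_1=1$ guarantees that vertex $1$ is one of them.

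For the reverse implication (the four conditions imply unique pressability) I would induct on $n$, peeling off the maximum (first-pressed) vertex via Observation \ref{left append observation} and Lemma \ref{remove max of poset}: the root of $G^{(1)}$ is $U_{>1}$, whose column weights are $\min(w_m,m-1)$. Two things then need checking. The routine step is that these truncated weights again satisfy Conditions~1--4 (full columns stay full after dropping the top row, monotonicity survives taking $\min$ with the increasing sequence $m-1$, and the parity flip of a truncated full column lands harmlessly inside the forced full tail); granting this, the induction hypothesis makes $G^{(1)}$ uniquely pressable. The substantive step --- the crux of the whole theorem --- is to show that vertex $1$ is the \emph{only} vertex that can begin a successful pressing sequence of $G$. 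If there are no odd weights beyond index $1$ this is immediate, since then $1$ is the unique looped vertex. Otherwise Condition~4 forces a full staircase ($w_j=j$) from the first offending index onward, and for any looped $m$ in this tail I would argue that pressing $m$ strands a nontrivial loopless component (in the simplest situations the pair $\{1,2\}$ itself): Condition~1 forces $w_2=2$, so vertex $2$ is permanently unlooped and, in the full tail, non-adjacent to $m$, while vertex $1$ lies in $N_G(m)$ and so loses its loop and its links to the upper vertices when $m$ is pressed. By Lemma \ref{pressifloopedcomponents} such an $m$ cannot start, which closes the induction.

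For the forward implication I would argue by contraposition, showing that the failure of any one condition produces a second successful pressing sequence --- equivalently, at some stage a second legal press --- again detected through the loop-parity formula and Lemma \ref{pressifloopedcomponents}. A gap in a column (failure of Condition~1) or a descent in the weights (Condition~2) creates an extra looped vertex at an intermediate stage whose pressing does \emph{not} orphan a loopless component; a value exceeding $2$ that repeats across indices $i$ and $i+2$ (Condition~3) makes two consecutive presses interchangeable; and a non-full odd column (Condition~4) leaves an initially looped vertex $i_0$ whose pressing keeps every component looped, so that a pressing sequence can begin with $i_0$ as well as with $1$ (the witness $w=(1,2,3,3)$, where both $1$ and $3$ start, is the prototype). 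In each case Lemma \ref{pressifLPN} certifies the second sequence by exhibiting a different flag of nonsingular leading principal submatrices.

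The main obstacle is the connectivity bookkeeping inside the crux and its forward mirror: precisely locating the stranded loopless component (or the interchangeable pair of presses) uniformly across every admissible weight pattern. The difficulty is that the neighborhood of a candidate vertex, and hence the component structure after pressing it, depends delicately on the full-versus-non-full dichotomy of each column and on the parities governed by Conditions~3 and~4; the clean behavior of vertices $1$ and $2$ used above must be extended to a general argument that tracks how pressing redistributes loops and edges among the upper vertices. Making this component analysis airtight --- rather than the inductive scaffolding or the weight-truncation lemma, both of which are routine --- is where the real work lies.
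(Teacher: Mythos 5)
First, a point of reference: the paper does not prove this statement. It is quoted verbatim as the main theorem of \cite{cooper2017uniquely} and used as a black box (in Lemma \ref{unique autonomous}), so there is no in-paper proof to compare yours against; I can only assess the proposal on its own terms.

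Your framework is the right one and is consistent with the machinery this paper does develop: the trailing principal submatrix of $U$ is the root of $G^{(1)}$, the loop on vertex $m$ after pressing $1,\dots,t$ is $\sum_{i=t+1}^{m}U[i,m]$ over $\mathbb{F}_2$ (hence $\min(w_m,m-t)\bmod 2$ under Condition~1), and Lemma \ref{pressifloopedcomponents} correctly converts ``$m$ can begin a successful pressing sequence'' into ``every nontrivial component of $G^{(m)}$ is looped.'' The weight-truncation step is also essentially right, though calling it routine undersells it: verifying that $\min(w_{k+1},k)$ again satisfies Condition~4 requires interleaving Conditions~2, 3, and 4 (e.g., ruling out $w_{j}=j-2$ inside a forced-full truncated tail uses Condition~3 together with the parity forcing of Condition~4), and a small incorrect attribution aside ($w_2=2$ follows from Conditions~2 and~4, not Condition~1), this part can be made to work. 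The genuine gap is that both directions bottom out in component-structure claims about $G^{(m)}$ that you assert rather than prove, and those claims \emph{are} the theorem. In the backward direction, for a looped $m\neq 1$ in the full tail you must exhibit a loopless nontrivial component of $G^{(m)}$; the candidate $\{1,2\}$ is not a component in general (e.g., for weights $(1,2,2,4,5,6)$ and $m=5$ the stranded loopless component is $\{1,2,3,4\}$, and for longer full tails vertex $1$ retains edges to the even-indexed tail vertices below $m$, so one must also show those carry no loops into the component). In the forward direction you need, for each of the four failure modes, an explicit second successful pressing sequence; a descent in the weights or a Condition-3 violation does not obviously yield ``two consecutive interchangeable presses'' without tracking exactly which off-diagonal entries of $A$ the swap perturbs. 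You flag this bookkeeping yourself as ``where the real work lies,'' and I agree --- but that means what you have is a plausible plan plus correct reductions, not a proof. To close it you would need a structural description of $N_G(v)$ for every $v$ directly in terms of the weight sequence (as computed above via $A[j,m]=\bigl|\,[j-w_j+1,j]\cap[m-w_m+1,m]\,\bigr|\bmod 2$) and a uniform argument, by induction on the position of $m$ in the tail, that the component of vertex $1$ in $G^{(m)}$ is loopless.
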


For an integer $n$, let $\Lambda(n)$ denote the poset with element set $[n]$ such that $n-2$ covers $n$ and $i$ covers $i+1$ for all $i\in [n-2]$.  The Hasse diagram of $\Lambda(n)$ consist of two minimal elements ($n-1$ and $n$) below a chain of length $n-2$. Let $G_{\Lambda(n)}$ be the OSP-graph with vertex set $V(G)=[n]$, edge set $E(G)=\{ (i,i+1) \mid i\in [n-1]\}\cup\{ (1,1),(n-2,n) \}$.  

\begin{lemma}\label{unique autonomous} $\Lambda(n)$ is  an autonomous poset and $G_{\Lambda(n)}$ is the unique graph which generates only $\Lambda(n)$.
\end{lemma}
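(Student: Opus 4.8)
The plan is to prove both assertions at once by induction on $n$, peeling $\Lambda(n)$ from the top and from the bottom and leaning on the reductions already in hand (Lemmas~\ref{remove min of poset} and~\ref{remove max of poset}) together with the uniquely-pressable classification (Theorem~\ref{uniquelymainthm}). The three facts that drive everything are: $\Lambda(n)$ has a unique maximal element, namely $1$; deleting it yields a copy of $\Lambda(n-1)$; and deleting either bottom element ($n$ or $n-1$) yields a chain. I would anchor the induction at $n=3$ and $n=4$ by hand and run the general step for $n\ge 5$.

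First I would dispatch the existence/autonomy half. Let $U_{\Lambda(n)}$ be the upper-triangular matrix that is the adjacency matrix of the Hasse diagram of $\Lambda(n)$ read as a DAG (a $1$ in position $(i,j)$ exactly when $i$ covers $j$, plus the diagonal). A direct column computation shows $U_{\Lambda(n)}^T U_{\Lambda(n)}=\operatorname{adj}(G_{\Lambda(n)})$, and reading off the column weights shows every column but the first has even weight, so $\mathcal L(G_{\Lambda(n)})=\{1\}$. Hence $U_{\Lambda(n)}$ is the instructional Cholesky root of $G_{\Lambda(n)}$ and $\poset(G_{\Lambda(n)})=\Lambda(n)$. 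Because $1$ is the only looped vertex, every successful pressing sequence begins with $1$, and $G_{\Lambda(n)}^{(1)}=G_{\Lambda(n-1)}$; by induction the latter generates only $\Lambda(n-1)$, so $G_{\Lambda(n)}$ generates only $\Lambda(n-1)$ with a maximum adjoined, i.e.\ only $\Lambda(n)$. (Equivalently $G_{\Lambda(n)}=1\oplus G_{\Lambda(n-1)}$ by Observation~\ref{left append observation}, invoking the lemma that left-appending preserves autonomy.) This makes $\Lambda(n)$ autonomous with $G_{\Lambda(n)}$ a generator.

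For uniqueness, let $G$ satisfy $\mathfrak S(G)=\{\Lambda(n)\}$ with instructional root $U$ in the order $(1,2,\dots,n)$. Since $1$ is the unique maximal element of $\Lambda(n)$ and the initial vertex of any $\sigma\in\Sigma(G)$ is maximal in $\poset(G,\sigma)=\Lambda(n)$, every successful pressing sequence begins with $1$; in particular $1\in\mathcal L(G)$. Applying Lemma~\ref{remove max of poset} to the maximal element $1$ shows $G^{(1)}$ generates only $\Lambda(n)-1\cong\Lambda(n-1)$, so by induction $G^{(1)}=G_{\Lambda(n-1)}$ and the trailing $(n-1)\times(n-1)$ block of $U$ equals $U_{\Lambda(n-1)}$. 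Thus $U$ is determined except for its first row $U[1,2],\dots,U[1,n]$, and transitivity forces $U[1,2]=1$ (the cover $1\succ 2$ has no intermediate vertex, so it must be a DAG edge). Since the trailing-block column $j$ has even weight for $j\ge 3$, vertex $j$ is looped in $G$ exactly when $U[1,j]=1$; the whole problem therefore reduces to proving $\mathcal L(G)=\{1\}$, i.e.\ that these remaining first-row entries vanish.

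To kill them I would delete the bottom element $n$: by Lemma~\ref{remove min of poset}, $G-n$ generates only $\Lambda(n)-n$, the chain $1\succ 2\succ\cdots\succ(n-1)$, which has a single linear extension and so is uniquely pressable. Its instructional root is the leading principal block $L$ of $U$, whose columns in positions $3,\dots,n-1$ are the two-element ``intervals'' $\{j-1,j\}$ possibly together with the entry $U[1,j]$. Matching against the interval-and-weight conditions of Theorem~\ref{uniquelymainthm} forces $U[1,j]=0$ there: a nonzero entry either produces a support $\{1,j-1,j\}$ that is not an interval ending at the diagonal (for $j\ge 4$), or an odd-weight column whose full-interval requirement propagates into the known sparse columns to its right. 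The symmetric deletion of $n-1$ (reordering by the second linear extension) disposes of $U[1,n]$ the same way, so $U=U_{\Lambda(n)}$ and $G=G_{\Lambda(n)}$. I expect the main obstacle to be precisely this elimination of the stray looped vertices: the interval argument cleans out the interior entries in one stroke, but the extremal entry $U[1,n]$ and the small base cases $n=3,4$ (where the interval condition no longer pins everything down) call for a hands-on pressing argument—assuming some $j_0\ge 3$ is looped, press it first and verify via Lemma~\ref{pressifloopedcomponents} that every nontrivial component of $G^{(j_0)}$ retains a loop, so $j_0$ begins a successful sequence, contradicting that every sequence starts at the unique maximal element $1$. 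Getting this component-and-loop bookkeeping correct around the triangle $\{n-2,n-1,n\}$, and nailing the base case $n=3$ (where Lemma~\ref{max with leaf} already gives $|\mathcal L|=1$), is the delicate part.
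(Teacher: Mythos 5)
Your proposal matches the paper's proof in all essentials: base cases $n=3,4$ by hand; induction via the unique maximal element $1$, giving $G^{(1)}\cong G_{\Lambda(n-1)}$ and fixing the trailing block of $U$; deletion of the minimal element $n$ to obtain a uniquely pressable chain graph whose interval/weight conditions from Theorem~\ref{uniquelymainthm} eliminate the interior first-row entries; and the $n\leftrightarrow(n-1)$ relabeling to eliminate $U[1,n]$. The only (cosmetic) difference is that you make the existence/autonomy half explicit via the left-append observation, where the paper leaves it largely implicit, and you cite Lemma~\ref{remove min of poset} where the paper re-runs the leading-principal-minor computation directly.
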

\begin{proof}
Observe that for $n=3$ we have only one instructional Cholesky that generates $\Lambda(n)$; $$U=\begin{bmatrix}
1 &1&1\\0&1&0\\0&0&1
\end{bmatrix} .$$ It follows that the only graph that generates $\Lambda(3)$ has adjacency matrix $$A=U^TU= \begin{bmatrix}
1 &1&1\\1&0&1\\1&1&0
\end{bmatrix} $$ which is the adjacency matrix of $G_{\Lambda(3)}$.

For $n=4$ we need only consider instructional Cholesky roots of the form: $$\begin{bmatrix}
1 &1 &*_1 &*_2\\0&1 &1&1\\0&0&1&0\\0&0&0&1
\end{bmatrix} $$  where $*_1, *_2\in \{0,1\}$.  A quick check reveals that setting $*_1= *_2=0$ yields a graph with two successful pressing sequences $(1,2,3,4)$ and $(1,2,4,3)$, and otherwise the resulting graph has $3$ or more successful pressing sequences; hence the claim holds for $n=4$.

We proceed by induction on $n\geq 5$.  Let $G$ be an OSP-graph that generates only $\Lambda(n)$.  Since $\Lambda(n)$ has maximum element $1$, we have that $1\in \mathcal{L}(G)$ and $G^{(1)}$ has instructional poset $\Lambda(n)-1$.  But $\Lambda(n)-1$ is
isomorphic to $\Lambda(n-1)$.  By the inductive hypothesis we have that $G^{(1)}$ is isomorphic to $G_{\Lambda(n-1)}$.

Let $U$ be the instructional Cholesky of $G$ under the identity permutation.  Let $A=U^TU$ and let $U'$ be the $(n-1) \times (n-1)$ leading principal submatrix of $U$,  $A'=U'^TU'$ and $G'=([n-1],E')$ the graph with adjacency matrix $A'$.  
Choose and fix $\sigma \in S_n$ such that $\sigma(n)=n$ and let $P_{\sigma}$ be the permutation matrix encoding $\sigma$.  Let $P_{\sigma'}$  be the $(n-1)\times (n-1)$ leading principal submatrix of $P_{\sigma'}$ and $\sigma'$ its corresponding permutation. 
Observe that since $G$ is full-rank then $A$ is invertible. Hence, $\sigma'\in \Sigma(G')$ if and only if $P_{\sigma'}^TA'P_{\sigma'} \textrm{ is in LPN form},$ which occurs if and only if $P_{\sigma}^TAP_{\sigma} \textrm{ is in LPN form}$, which in turn occurs if and only if 
$\sigma\in \Sigma(G).$

Since $\Sigma(G)=\{(1,2,\ldots,n-2, n-1, n), (1,2,\ldots,n-2, n, n-1)\}$ we have that the only successful pressing sequence of $G'$ is $\sigma'=(1,2,\ldots,n-2, n-1)$ and hence $G'$ is a uniquely pressable graph (has only one pressing sequence). By Theorem \ref{uniquelymainthm}, if $U'[1,i]=1$ then $U'[2,i]=U'[2,i+1]=1$ and hence 
for $2\leq i\leq n-2$ if $U[1,i]=1$ then $U[2,i]=U[2,i+1]=1$ .  However the intructional Cholesky root of $G^{(1)}$ is the $(n-1)\times (n-1)$ trailing principal minor of $U$ and $G^{(1)}$ is isomorphic to $G_{\Lambda(n-1)}$.  It follows that $U[2,i+1]=0$ for all $3\leq i\leq n-1$ thus $U[1,i]=0$ for all $3\leq i\leq n-1$, hence $[3,n-1]\cap N_G(1)=\emptyset$.  Observe that by relabeling $n$ to $n-1$ and vice-versa we can make the same argument and conclude that $n\notin N_G(1)$, therefore $U[1,n]=0$.  We conclude that $G=G_{\Lambda(n)}$.

\end{proof}

For an integer $n$ we let $X(n)$ denote the poset with element set $[n]$ so that $1$ covers $3$, $n-2$ covers $n$, and $i$ covers $i+1$ for all $i\in [2,n-2]$.  The Hasse diagram of $X(n)$ consist of a chain of length $n-4$ joining two minimal elements ($n-1$ and $n$) to two maximal elements ($1$ and $2$). 

\begin{lemma}\label{X poset not an autonomous} $X(n)$ is not an autonomous poset.
\end{lemma}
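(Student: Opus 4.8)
The plan is to argue by contradiction. Suppose $X(n)$ is autonomous and fix an autonomous generator $G$ with $\mathfrak{S}(G)=\{X(n)\}$, ordered by the identity linear extension $(1,2,\ldots,n)$; write $U$ for its instructional Cholesky root and $A=U^TU$ for its adjacency matrix. The whole argument rests on one handle: since $1\succ 3$, the element $3$ is not maximal, so no linear extension of $X(n)$ begins with $3$. Hence it suffices to produce a successful pressing sequence of $G$ that begins with $3$; such a sequence lies in $\Sigma(G)\setminus\linext(X(n))$ and contradicts autonomy. (When $n=4$ the poset $X(4)$ is literally the pattern $N$, so it is not autonomous already by Lemma~\ref{N not an autonomous}; assume $n\ge 5$.)

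First I would pin down $U$ up to its first row. Both $1$ and $2$ are maximal in $X(n)$, and $X(n)-1\cong X(n)-2\cong\Lambda(n-1)$. Pressing a maximal vertex first and deleting it (Lemma~\ref{remove max of poset}) produces an autonomous generator of $\Lambda(n-1)$, which by Lemma~\ref{unique autonomous} must be $G_{\Lambda(n-1)}$; thus $G^{(1)}\cong G^{(2)}\cong G_{\Lambda(n-1)}$. Applying this to the vertex $1$ identifies the trailing $(n-1)\times(n-1)$ principal submatrix of $U$ with the instructional root of $G_{\Lambda(n-1)}$ (under $k\mapsto k-1$), whose off-diagonal support is the chain $U[i,i+1]=1$ for $2\le i\le n-2$ together with $U[n-2,n]=1$. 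With $U[1,3]=1$ forced by the cover $1\succ 3$ and $U[1,2]=0$ by incomparability, the only remaining freedom is the set $S=\{\,j\in\{4,\ldots,n\}:U[1,j]=1\,\}$. Reading $A=U^TU$ off as intersections of columns of $U$, I would extract the two facts that drive everything: the third column of $U$ is $\{1,2,3\}$, so it has odd weight and \emph{$3$ is always looped}; and index $2$ occurs only in that third column, so $N_G(2)=\{3\}$ and $\mathcal{L}(G)=\{1,2,3\}\cup S$.

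Next I would force $S=\emptyset$ by exploiting the second maximal vertex. Since $G^{(2)}\cong G_{\Lambda(n-1)}$ and $G_{\Lambda(n-1)}$ has exactly one loop, $G^{(2)}$ has exactly one looped vertex. But pressing $2$ complements $G[N_G(2)]=G[\{2,3\}]$ and so toggles the loops at $2$ and $3$, giving $\mathcal{L}(G^{(2)})=(\{1,2,3\}\cup S)\triangle\{2,3\}=\{1\}\cup S$. A single loop forces $S=\emptyset$, which determines $G$ completely: loops exactly on $\{1,2,3\}$ and (for $n\ge 6$) edge set $\{13,23\}\cup\{\,i(i+1):3\le i\le n-2\,\}\cup\{(n-2)n\}$.

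Finally I would press $3$ in this explicit graph and invoke Lemma~\ref{pressifloopedcomponents}. Pressing $3$ complements $G[\{1,2,3,4\}]$: it isolates $3$, toggles the loop off $1,2$ and onto $4$, and installs the edges $\{12,14,24\}$, while leaving the path $4\,5\cdots(n-1)$ and the edge $(n-2)n$ untouched. Consequently $\{1,2,4,5,\ldots,n\}$ forms a single nontrivial component that contains the looped vertex $4$, so by Lemma~\ref{pressifloopedcomponents} the resulting graph is successfully pressable; prepending $3$ yields a successful pressing sequence of $G$ beginning at the non-maximal vertex $3$, the desired contradiction. I expect the crux to be the bookkeeping of the middle two paragraphs — getting the $\Lambda(n-1)$-spine indices right along the chain and, above all, noticing that applying the uniqueness of $G_{\Lambda(n-1)}$ in the two admissible orders $(1,2,\ldots)$ and $(2,1,\ldots)$ is self-contradictory unless $S=\emptyset$, at which point the unavoidable loop at the non-maximal vertex $3$ immediately breaks autonomy. (The collision case $n=5$, where $n-2=3$ merges two of the edges above and where the vertex $5$ may also stay looped inside the single nontrivial component, is handled by the same press-$3$ argument but verified directly.)
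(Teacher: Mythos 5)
Your proof is correct and follows essentially the same route as the paper's: both arguments use $G^{(1)}\cong G^{(2)}\cong G_{\Lambda(n-1)}$ (via Lemmas~\ref{remove max of poset} and~\ref{unique autonomous}) to force $\mathcal{L}(G)=\{1,2,3\}$ and $N_G(1)\setminus\{1\}=N_G(2)\setminus\{2\}=\{3\}$, and then show that pressing $3$ first leaves a single nontrivial component containing a loop at $4$, contradicting autonomy by Lemma~\ref{pressifloopedcomponents}. The only slip is harmless bookkeeping: your explicit edge list for $G$ omits the edge between $n-1$ and $n$ (which is present, since columns $n-1$ and $n$ of $U$ both have a $1$ in row $n-2$), but this does not affect the component-and-loop analysis after pressing $3$.
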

\begin{proof}
Assume, by way of contradiction, that $X(n)$ is  an autonomous poset and let $G$ be any graph that generates only $X(n)$.  Every successful pressing sequence of $G$ must begin with $1,2,3$ or $2,1,3$.  Thus, $\{1,2\}\subseteq \mathcal{L}(G)$. Since $3$ must be looped after pressing $1$ and $2$, and since the instructional Cholesky root instructs that both $1$ and $2$  switch the state of $3$ upon being pressed, then $3\in \mathcal{L}(G)$. 
Observe that $X(n)-1$ and $X(n)-2$ are isomorphic to $\Lambda(n-1)$ and hence $G^{(1)}$ and $G^{(2)}$ are isomorphic to $G_{\Lambda(n-1)}$ and hence each have exactly one looped vertex.  In particular,  $\mathcal{L}\left(G^{(i)}\right)=\{j\}$ for $\{i,j\}=\{1,2\}$.  Since $1$ and $2$ are both maximal in $X(n)$ then $(1,2)\notin E(G)$.  It follows that  $N_G(j)=N_{G^{(i)}}(j)$ for $ \{i,j\}=\{1,2\}$.  Therefore, by considering the structure of $G_{\Lambda(n-1)}$, we see $N_G(1)\setminus\{1\}=N_G(2)\setminus\{2\}=\{3\}$; furthermore, $\mathcal{L}(G)=\{1,2,3\}$.  Consider the result of pressing $3$ in $G$:  $(1,2),(1,4),(2,4)$ become edges, $4$ becomes looped, and every other vertex incident to $3$ in $G$ becomes incident to both $1$ and $2$ in $G^{(3)}$.  Thus, there is exactly one component in $G^{(3)}$ and it contains a looped vertex at $4$.  By Lemma \ref{pressifloopedcomponents} there is a successful pressing sequence in $G$ that begins with $3$, a contradiction.  We conclude that $X(n)$ is not an autonomous poset.

\end{proof}

\begin{lemma}\label{bowties are not autonomous} Autonomous posets are induced bowtie-free.
\end{lemma}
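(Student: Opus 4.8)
The plan is to mirror the strategy of Lemma~\ref{N not an autonomous}: assume for contradiction that some autonomous poset $\cP'$ contains an induced occurrence $(a,b,c,d)$ of the bowtie, and then peel $\cP'$ down to a canonical shape on which autonomy can be contradicted. First I would iteratively delete maximal and minimal elements, invoking Lemmas~\ref{remove max of poset} and~\ref{remove min of poset} to preserve autonomy at each step, until $a,b$ are the only maximal elements and $c,d$ are the only minimal elements. Since none of $a,b,c,d$ is ever removed, the induced bowtie survives, so the resulting poset $\cP$ is autonomous, has maxima $\{a,b\}$ and minima $\{c,d\}$, and still contains the induced bowtie $(a,b,c,d)$. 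Because $\cP$ is autonomous it is induced $N$-free by Lemma~\ref{N not an autonomous} (equivalently, series--parallel), a fact I would use repeatedly to constrain its shape. To set up the endgame I would also take $\cP$ minimal in the number of elements among all such configurations; note that after the reduction \emph{every} single removal of an extremal element destroys the bowtie, so $\cP$ is in this sense a critical bowtie.

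The target is to show that autonomy forces $\cP$ to be exactly the stretched bowtie $X(n)$ and then to quote Lemma~\ref{X poset not an autonomous}. Concretely, I would fix a generator $G$ with $\mathfrak{S}(G)=\{\cP\}$ and extract structure as in Lemmas~\ref{max with leaf} and~\ref{X poset not an autonomous}: since $a$ and $b$ are maximal, both lie in $\mathcal{L}(G)$, and since $a$ and $b$ are incomparable we must have $ab\notin E(G)$. Deleting $a$ (respectively $b$) leaves an autonomous poset generated by $G^{(a)}$ (respectively $G^{(b)}$) by Lemma~\ref{remove max of poset}, and these smaller posets, being reductions with a single remaining maximum above two minima, fall under Lemmas~\ref{max with leaf} and~\ref{unique autonomous}; this pins down $\mathcal{L}(G)$ and the neighborhoods $N_G(a),N_G(b)$. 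I would then argue that the elements lying below both $a$ and $b$ and above both $c$ and $d$ must form a single chain, with $a$ and $b$ both covering its top and its bottom covering both $c$ and $d$, i.e. $\cP\cong X(n)$, so that Lemma~\ref{X poset not an autonomous} yields the contradiction. If $\cP$ were not $X(n)$, the extra structure should instead produce a forbidden pressing sequence: either two incomparable elements could be interchanged in a successful pressing, or (as in the computation pressing vertex $3$ in Lemma~\ref{X poset not an autonomous}) pressing a central ``confluence'' vertex would leave every nontrivial component of the resulting graph with a looped vertex, producing by Lemma~\ref{pressifloopedcomponents} a successful pressing sequence that is not a linear extension of $\cP$, contradicting $\mathfrak{S}(G)=\{\cP\}$.

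The hard part will be forcing the chain, i.e. excluding the series--parallel configurations that are \emph{not} of the form $X(n)$. The smallest obstruction is the five-element poset $\big((a\succ x)\sqcup b\big)\oplus(c\sqcup d)$, in which the middle element $x$ lies below only one of the two maxima; this poset is induced $N$-free, so $N$-freeness alone cannot rule it out. Crucially, the poset-reduction lemmas are also useless here: in the reduced $\cP$ every extremal removal destroys the bowtie, so a branching element below only one maximum cannot be eliminated by deleting the other maximum. The entire weight of the exclusion must therefore come from autonomy through the pressing behaviour of $G$, exactly in the spirit of Lemmas~\ref{max with leaf} and~\ref{X poset not an autonomous}. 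My plan for this step is to show that any such branching or widening of the middle forces $G$ to admit an initial press at a vertex that is not maximal in $\cP$, by checking that pressing a suitable central vertex deposits a looped vertex into every nontrivial component of the result.

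Thus the argument splits into a routine reduction, a generator-graph analysis identifying $\mathcal{L}(G)$ and the neighborhoods of the two maxima, and a case analysis on the middle of $\cP$. I expect the exhaustive case analysis excluding all branchings of the middle to be the genuine obstacle; once it is complete and $\cP$ is reduced to $X(n)$, Lemma~\ref{X poset not an autonomous} closes the argument and establishes that autonomous posets are induced bowtie-free.
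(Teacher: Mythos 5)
Your reduction and endgame match the paper's, but the middle step --- forcing the reduced poset to be $X(n)$ --- is where your proposal has a genuine gap, and the gap rests on a mistaken premise. You claim that the five-element poset $\bigl((a\succ x)\sqcup b\bigr)\oplus(c\sqcup d)$, in which the middle element $x$ lies below only one of the two maxima, cannot be eliminated by the reduction lemmas and must therefore be excluded by analyzing the pressing behaviour of a generator. That is false: in that poset $(x,b,c,d)$ is itself an induced bowtie, so deleting the maximal element $a$ (Lemma~\ref{remove max of poset}) leaves a strictly smaller autonomous poset that still contains an induced bowtie, contradicting your own minimality assumption. The paper exploits exactly this: after reducing to a configuration containing no properly contained bowtie occurrence, it shows purely combinatorially that every non-extremal $x$ must satisfy $a\succ x\succ y$ and $b\succ x\succ z$ (each other sign pattern yields either a nested induced bowtie or, when $b\not\succ x\not\succ z$, an induced $N$, impossible by Lemma~\ref{N not an autonomous}), and that any two middle elements are comparable (else they span a smaller bowtie). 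This forces $\cP\cong X(m)$ with no graph-theoretic work at all; the only pressing-sequence input is Lemma~\ref{X poset not an autonomous} plus a four-element base case.

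Two concrete consequences for your write-up. First, the ``exhaustive case analysis excluding all branchings of the middle,'' which you identify as the genuine obstacle and propose to handle by locating a pressable non-maximal vertex of $G$, is never actually carried out --- it remains a plan --- and it is unnecessary, since the minimality-of-the-bowtie argument does that work. Second, your reduction lands on $X(n)$ with $n\geq 5$ only when the middle is nonempty; the four-element pure bowtie on $\{a,b,c,d\}$ is not of the form $X(n)$ and needs separate treatment (the paper writes down its unique instructional Cholesky root and exhibits the pressing sequence $(4,3,2,1)$, which is not a linear extension). As written, your argument closes neither of these cases.
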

\begin{proof}
Let $\cP$ be an autonomous poset.  By Lemma \ref{N not an autonomous}, $\cP$ is induced $N$-free.  Assume, towards a contradiction, that $(a,b,y,z)$ is an induced occurrence of the pattern bowtie.  By iteratively removing maximal and minimal elements, and by application of Lemmas \ref{remove min of poset} and \ref{remove max of poset},  we may assume $a,b,y$, and $z$ are the only extremal elements of $\cP$, and that $\cP$ does not properly contain another occurrence of the pattern bowtie.

If the only elements of $\cP$ are $a,b,y,z$ then $$U=\begin{bmatrix}
1 &0 &1 &1\\0 &1 &1 &1\\0 &0 &1 &0\\0 &0 &0 &1\\
\end{bmatrix}$$ and hence $$A=\begin{bmatrix}
1 &0 &1 &1\\0 &1 &1 &1\\1 &1 &1 &0\\1 &1 &0 &1\\
\end{bmatrix}$$ which has a successful pressing sequence of $(4,3,2,1)$, contrary to assumption.

Choose and fix $x\in \cP$ such that $x\notin \{a,b,y,z\}$.  Since $x$ is not extremal in $\cP$ we may assume, without loss of generality, that $a\succ x\succ y$.  If $b\succ x \not\succ z$ then $(a,b,x,z)$ induces a bowtie, contrary to assumption.  Similarly, if $b\not\succ x \succ z$ then $(x,b,y,z)$ induces a bowtie. Observe that if $b\not\succ x \not\succ z$ then $(a,b,x,z)$ induces an $N$, contradicting Lemma \ref{N not an autonomous}.  Thus we must proceed under the assumption that $b\succ x \succ z$.  

Observe that the choice of $x$ was arbitrary so any $w\in \cP\setminus \{a,b,y,z\}$ must also satisfy $a\succ w \succ y$ and $b\succ w \succ z$.  If $x$ and $w$ are incomparable then $(a,b,x,w)$ and  $(x,w,y,z)$ induce a smaller bowtie, contrary to assumption.  Hence, any two elements in $\cP\setminus \{a,b,y,z\}$ must be comparable, therefore $\cP=X(m)$ for some $m\geq 5$.  This contradicts Lemma \ref{X poset not an autonomous}.

\end{proof}

\end{section}
\begin{section}{Main Result}
In \cite{hasebe2017order}
(and later in \cite{misanantenaina2018tutte}) the authors gave a simple description of posets  that are both induced $N$-free and induced bowtie-free which we include here as Definition \ref{V definition} and Theorem \ref{V theorem}.

\begin{definition}\label{V definition}
A poset is called a \emph{$\mathcal{V}$-poset} if it can be generated by beginning with the singleton poset and then iteratively applying any of the following  three operations:
\begin{itemize}
\item[(1)] a disjoint union,
\item[(2)] adding a new greatest element,
\item[(3)] adding a new least element.
\end{itemize}
\end{definition}

\begin{theorem}[\cite{hasebe2017order}, Theorem 4.3]\label{V theorem}
A poset is induced $N$-free and induced bowtie-free if and only if it is a $\mathcal{V}$-poset.
\end{theorem}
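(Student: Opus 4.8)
The plan is to prove both implications by induction on $|\cP|$, with the forward direction (every $\mathcal{V}$-poset is induced $N$-free and induced bowtie-free) being routine and the reverse direction carrying all the difficulty. For the forward direction I would check that each of the three generating operations preserves both properties. The key observation is that in any induced occurrence of $N$ or of bowtie, every one of the four elements is incomparable to at least one of the other three. Hence a newly added greatest (or least) element, being comparable to every other element, cannot participate in such an occurrence; so an induced $N$ or bowtie in the poset obtained by adding a greatest (resp. least) element would already live in the smaller poset, and induction applies. For a disjoint union $\cP_1 \sqcup \cP_2$, the comparability graph of each forbidden pattern ($N$ is a path, bowtie is a $4$-cycle) is connected, so all four elements must lie in a single component; thus any occurrence is contained in some $\cP_i$, and induction applies.

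For the reverse direction, the inductive step splits on whether the comparability graph of $\cP$ is connected. If it is disconnected, then $\cP$ is a disjoint union of two smaller induced subposets, each still induced $N$- and bowtie-free, so induction together with operation (1) finishes. The heart of the argument is therefore the claim that a comparability-connected, induced $N$-free, induced bowtie-free poset on $\ge 2$ elements has either a greatest or a least element; granting it, one removes that extremal element (whose deletion leaves an induced subposet, hence still $N$- and bowtie-free) and applies induction with operation (2) or (3).

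To prove the claim I would introduce the bipartite graph $G_B$ whose parts are the set $M$ of maximal elements and the set $m$ of minimal elements of $\cP$, with $a \in M$ joined to $w \in m$ exactly when $w \prec a$. Three facts drive the proof. First, connectivity of $\cP$ forces $G_B$ to be connected: along a comparability path between two maximal elements, consecutive vertices have a common minimal lower bound, which stitches the maximal elements above them into one component of $G_B$, and every minimal element attaches to that component. Second, an induced $P_4$ in $G_B$ is precisely an induced $N$ among extremal elements of $\cP$, and an induced $C_4$ (equivalently $K_{2,2}$) is precisely an induced bowtie; so the hypotheses make $G_B$ simultaneously $P_4$-free and $C_4$-free. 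Third, a connected $P_4$-free graph is a cograph, a connected bipartite cograph is a complete bipartite graph $K_{s,t}$, and $C_4$-freeness then forces $s=1$ or $t=1$. A star in $G_B$ means $|M|=1$ or $|m|=1$, i.e.\ $\cP$ has a unique maximal (hence greatest) or unique minimal (hence least) element, proving the claim and with it Theorem \ref{V theorem}.

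The main obstacle I anticipate is the bookkeeping in this last step: verifying cleanly that the $P_4$ and $C_4$ translations are genuine if-and-only-if statements — in particular that the ``induced'' conditions on the four poset elements correspond exactly to non-adjacency in $G_B$, using that two distinct maximal (or two distinct minimal) elements are automatically incomparable — and confirming that $G_B$ has no isolated vertices, so that the connectivity argument reaches every extremal element. Once these are nailed down, the cograph characterization delivers the greatest/least dichotomy immediately, and the two inductions combine to give the equivalence.
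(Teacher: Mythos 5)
Your proposal is correct, but note that the paper does not prove this statement at all: Theorem \ref{V theorem} is imported verbatim from Hasebe--Tsujie (their Theorem 4.3), so there is no in-paper argument to compare against. What you have written is a self-contained proof, and its skeleton is sound. The forward direction (each generating operation preserves both exclusions, since every element of an induced $N$ or bowtie is incomparable to some other element of the pattern, and both patterns have connected comparability graphs) is routine and correct. The reverse direction's crux --- that a connected induced $N$-free, bowtie-free poset on at least two elements has a greatest or least element --- is the same structural fact that drives Hasebe--Tsujie's induction, but your route to it via the bipartite graph $G_B$ on $M \cup m$ is a genuinely different and rather clean packaging: induced $P_4$'s in $G_B$ are exactly induced $N$'s on extremal elements, induced $C_4$'s are exactly induced bowties (using that two distinct maximal, or two distinct minimal, elements are automatically incomparable), connectivity of $\cP$ transfers to $G_B$ via common minimal lower bounds of comparable pairs, and a connected $P_4$-free bipartite graph is complete bipartite (join decomposition of cographs, or more elementarily: a shortest path between a nonadjacent pair in a connected bipartite graph has length at least $3$ and its first four vertices induce a $P_4$), whence $C_4$-freeness forces a star and hence $|M|=1$ or $|m|=1$. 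The only points to nail down in a full write-up are the ones you already flag (no isolated vertices in $G_B$, and $M\cap m=\emptyset$ for a connected poset on $\ge 2$ elements), plus the observation that a unique maximal element of a finite poset is in fact a greatest element; none of these is a gap. One caveat: the paper's displayed definition of an induced bowtie omits the relation $b\succ c$ (an apparent typo, since as written it duplicates the induced $N$); your argument correctly uses the intended $K_{2,2}$ pattern.
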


\begin{theorem}
$\cP$ is autonomous if and only if $\cP$ is induced $N$-free and induced bowtie-free.
\end{theorem}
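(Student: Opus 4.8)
The plan is to dispatch the two directions separately, with essentially all of the work for the forward direction already complete. For the implication ``autonomous $\Rightarrow$ induced $N$-free and induced bowtie-free'', I would simply invoke Lemmas \ref{N not an autonomous} and \ref{bowties are not autonomous}, which assert exactly these two exclusions; nothing further is required.

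The content lies in the converse. Rather than reasoning directly about the forbidden patterns, I would route through Theorem \ref{V theorem}, which identifies the induced $N$-free and induced bowtie-free posets with the $\mathcal{V}$-posets. It therefore suffices to show that every $\mathcal{V}$-poset is autonomous, and since $\mathcal{V}$-posets are given by a recursive construction, the natural approach is induction on the number of build steps. The base case is the singleton poset, generated by the single looped vertex, which is uniquely pressable and hence trivially autonomous. For the inductive step there are three operations to handle.

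Two of the three are already in hand: adding a new greatest element is realized on the graph side by left-appending (the operation $x \oplus G$), and adding a new least element by right-appending ($G \oplus x$); the two lemmas proved above establish that each of these constructions preserves autonomy and produces exactly $\poset(G)$ with a new maximum (respectively, minimum) adjoined. Thus if $\cP$ is autonomous with autonomous generator $G$, then $\cP$ with a greatest (or least) element added is again autonomous.

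The one operation not yet covered, and the step I expect to require the only genuinely new argument, is the disjoint union. Given autonomous generators $G_1$ and $G_2$ of autonomous posets $\cP_1$ and $\cP_2$, I would relabel their vertices to be disjoint and form $G_1 \sqcup G_2$, ordered by concatenation. Its adjacency matrix is block diagonal, hence full-rank, and because pressing a vertex in one component never alters the other, the instructional Cholesky root is block diagonal as well; its associated DAG has no edges between the two vertex sets, so $\poset(G_1 \sqcup G_2) = \cP_1 \sqcup \cP_2$. The key observation is then that the independence of the two components forces $\Sigma(G_1 \sqcup G_2)$ to be exactly the set of shuffles (interleavings) of sequences in $\Sigma(G_1)$ and $\Sigma(G_2)$, while standard poset theory gives $\linext(\cP_1 \sqcup \cP_2)$ as exactly the shuffles of $\linext(\cP_1)$ and $\linext(\cP_2)$. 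Combining these with the inductive hypotheses $\Sigma(G_i) = \linext(\cP_i)$ yields $\Sigma(G_1 \sqcup G_2) = \linext(\poset(G_1 \sqcup G_2))$, so $G_1 \sqcup G_2$ is autonomous and $\cP_1 \sqcup \cP_2$ is autonomous. The care needed is in verifying the shuffle characterization of $\Sigma(G_1 \sqcup G_2)$ rigorously, in particular that no successful pressing sequence can combine the components in a way that is not a plain interleaving; this follows from the fact that loopedness and the effect of each press are computed entirely within a single component.
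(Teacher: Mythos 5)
Your proposal is correct and follows essentially the same route as the paper: both directions are handled identically, with the forward implication cited from Lemmas \ref{N not an autonomous} and \ref{bowties are not autonomous} and the converse proved by induction over the $\mathcal{V}$-poset construction via Theorem \ref{V theorem}, using the left- and right-append lemmas for the greatest/least element operations. Your treatment of the disjoint union case (block-diagonal adjacency matrix, pressing sequences as shuffles matching linear extensions of the disjoint union) is in fact more explicit than the paper's, which asserts this step in one sentence, but it is the same underlying argument.
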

\begin{proof} 
By Lemmas \ref{N not an autonomous} and \ref{bowties are not autonomous},  if $\cP$ is autonomous then $\cP$ is induced $N$-free and induced bowtie-free. By Theorem \ref{V theorem} it suffices to show that $\mathcal{V}$-posets are autonomous.

A poset on one element is autonomous as it corresponds to the uniquely pressable graph on a single looped vertex.  We proceed by induction.  Let $n\geq 2$ and assume that all  $\mathcal{V}$-posets on $n-1$ vertices are autonomous.  Let $\cP$ be a $\mathcal{V}$-poset on $n$ vertices.  If $\cP$ is the disjoint union of multiple posets then each of its connected subposets is a smaller $\mathcal{V}$-poset. By inductive hypothesis for each connected subposet there is a graph that generates it  and has only the pressing sequences dictated by said subposet.  It follows that in this case $\cP$ is autonomous as well.  Suppose now that $\cP$ is connected.  It then follows that $\cP$ has a unique maximal or a unique minimal element.  Let $\cP-x$ be the result of removing a unique maximal or minimal element from $\cP$.  Observe that $\cP-x$ is a $\mathcal{V}$-poset and thus by induction is autonomous; let $H$ be a graph such that $\mathfrak{S}(H)=\{\cP-x\}$.  By Lemmas \ref{left append observation} and \ref{right append observation}, $x\oplus H$ or $H\oplus x$  generates only $\cP$ and therefore  is  autonomous.
\end{proof}

\end{section}

\begin{section}{$\mathcal{V}$-poset Recognition}

For a poset $\cP$ we let $n_\cP$ and $e_\cP$ denote the number of vertices and  edges in the  Hasse diagram of the poset, respectively.  We let $h_\cP$ denote the sum of the heights of components of $\cP$ (the height of a poset is the length of its longest chain), $c_\cP$ denote the number of components of $\cP$, and $M_\cP$ and $m_\cP$ denote the number of maximal and minimal elements in $\cP$, respectively.
\begin{lemma}\label{edge count}
If $\cP$ is a $\mathcal{V}$-poset then
$$
e_\cP=2n_\cP+c_\cP-M_\cP-m_\cP-h_\cP \leq 2n_\cP - 2
$$
\end{lemma}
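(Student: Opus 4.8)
The plan is to prove the identity $e_\cP = 2n_\cP + c_\cP - M_\cP - m_\cP - h_\cP$ by induction on the recursive construction of $\mathcal{V}$-posets in Definition \ref{V definition}, and then to extract the bound $e_\cP \le 2n_\cP - 2$ from the identity together with three elementary per-component inequalities. For the base case I would take the singleton poset, where $(n,e,c,M,m,h) = (1,0,1,1,1,1)$; here it is essential to fix the convention that the height of a component counts the \emph{vertices} of a longest chain, so that a one-element poset has height $1$, and then both sides equal $0$.

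For the inductive step there are three operations to handle. The disjoint union is immediate: each of $n_\cP, e_\cP, c_\cP, M_\cP, m_\cP$ and $h_\cP$ is additive over connected components, so the right-hand side $2n_\cP + c_\cP - M_\cP - m_\cP - h_\cP$ is additive as well, and the identity for the union follows at once from the identity for the two pieces. This reduces the problem to the two cases in which $\cP$ is obtained from a smaller $\mathcal{V}$-poset $Q$, with parameters $(n,e,c,M,m,h)$, by adjoining a new greatest or least element.

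The heart of the argument is the add-extremum step. Adjoining a greatest element $x$ to $Q$ changes the statistics in a controlled way: $x$ covers exactly the $M$ maximal elements of $Q$, so $e$ increases by $M$; one vertex is added; the result is connected, so $c$ drops to $1$; $x$ becomes the unique maximal element, so $M$ drops to $1$; and the minimal elements are untouched, so $m$ is preserved. Feeding these into the inductive hypothesis, the identity for $\cP$ becomes equivalent to the assertion that the height term changes by exactly $2-c$. I expect this height bookkeeping to be the main obstacle: after adjoining $x$ the poset is a single component whose height is one more than the tallest chain in $Q$, so one must argue that the longest chains of the various components of $Q$ combine with the other terms in precisely the way the identity demands, and then double-check this against the vertex-counting convention fixed in the base case. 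The add-least case is symmetric, with the roles of $m$ and $M$ (and of minimal and maximal) interchanged, and $e$ increasing by $m$ instead.

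Finally, the inequality falls out of the identity by a counting bound. Every component contributes at least one maximal element, at least one minimal element, and a longest chain of at least one vertex, whence $M_\cP \ge c_\cP$, $m_\cP \ge c_\cP$, and $h_\cP \ge c_\cP$. Therefore $M_\cP + m_\cP + h_\cP - c_\cP \ge 3c_\cP - c_\cP = 2c_\cP \ge 2$, and since $e_\cP = 2n_\cP - (M_\cP + m_\cP + h_\cP - c_\cP)$ by the identity, this rearranges to $e_\cP \le 2n_\cP - 2$, completing the proof.
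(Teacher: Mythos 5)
Your reduction of the add-greatest-element step to the identity $h_\cP - h_{\mathcal{Q}} = 2 - c_{\mathcal{Q}}$ is exactly right, and you correctly single it out as the main obstacle --- but you leave it unproved, and in fact it cannot be proved. With the vertex-counting convention your base case forces, adjoining a greatest element to $\mathcal{Q}$ gives $h_\cP = 1 + \max_i h_{Q_i}$, whereas $h_{\mathcal{Q}} = \sum_i h_{Q_i}$, so the required identity reads $\max_i h_{Q_i} - 1 = \sum_i \left(h_{Q_i} - 1\right)$, which holds only when at most one component of $\mathcal{Q}$ is a non-singleton. Concretely, take $\mathcal{Q}$ to be the disjoint union of two two-element chains and let $\cP$ be $\mathcal{Q}$ with a new greatest element (a $\mathcal{V}$-poset): then $n_\cP = 5$, $e_\cP = 4$, $c_\cP = 1$, $M_\cP = 1$, $m_\cP = 2$, $h_\cP = 3$, and $2n_\cP + c_\cP - M_\cP - m_\cP - h_\cP = 5 \neq 4$. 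So the gap in your argument is not closable: the displayed equality is itself false for this $\cP$. (The inequality $e_\cP \le 2n_\cP - 2$ is still true --- it follows from Lemma \ref{width count}, since $e_\cP = n_\cP + w_\cP - M_\cP - m_\cP \le n_\cP + w_\cP - 2 \le 2n_\cP - 2$ --- but your derivation of it routes through the false identity.)

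For what it is worth, the paper's own proof stalls at the same point: it inducts on connected posets, deletes the unique greatest element $x$, and then applies both the connected-case formula $e_{\mathcal{Q}} = 2n_{\mathcal{Q}} + 1 - M_{\mathcal{Q}} - m_{\mathcal{Q}} - h_{\mathcal{Q}}$ and the relation $h_{\mathcal{Q}} = h_\cP - 1$ to $\mathcal{Q} = \cP - x$, both of which silently assume $\mathcal{Q}$ is connected; when $\mathcal{Q}$ has two or more non-singleton components the bookkeeping breaks exactly as above. So you have located precisely the step that needs repair. The structural reason Lemma \ref{width count} survives while this one does not is that width, like $n$, $e$, $M$, $m$, and $c$, is additive over components, whereas the height of a connected poset formed by capping a disjoint union is governed by a maximum, not a sum; any fix must replace $h_\cP$ by a quantity satisfying the recursion $h_\cP = h_{\mathcal{Q}} - c_{\mathcal{Q}} + 2$ that you derived, and that quantity is not the height.
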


\begin{proof}  We show that $e_\cP=2n_\cP+1-M_\cP-m_\cP-h_\cP$ for a connected poset; the equality above follows by summing over components, and the inequality is immediate.
Observe that if $n_\cP=1$ then $\cP$ is a poset one element and hence $(2n_\cP+1)-(M_\cP+m_\cP+h_\cP)=0=e_\cP$.  Assume towards an inductive argument that $n_\cP\geq 2$.  Since $\cP$ is connected it must have a unique minimal or maximal element, say $x$, which we assume will be maximal (as the argument is identical for a minimal element). Let $\mathcal{Q}=\cP-\{x\}$.  Then, by applying the inductive hypothesis to $\mathcal{Q}$,  $$e_\cP-M_{\mathcal{Q}}=e_{\mathcal{Q}}=2n_\mathcal{Q}+1-M_\mathcal{Q}-m_\mathcal{Q}-h_\mathcal{Q}$$
$$e_\cP=2n_\mathcal{Q}+1-m_\mathcal{Q}-h_\mathcal{Q}=2(n_\cP-1)+1-m_\cP-(h_\cP-1)$$
$$e_\cP=2n_\cP-m_\cP-h_\cP$$
By noting that $M_\cP=1$, we have our result.
\end{proof}
We now give a different edge count that uses width (referred to as $w_\cP$ in the statement) instead of heights.  While both of these edge counts are necessary for the property of being a $\mathcal{V}$-poset, even when taken together, they are not sufficient.

\begin{lemma}\label{width count}
If $\cP$ is a $\mathcal{V}$-poset then
$$
e_\cP=n_\cP+w_\cP -M_\cP-m_\cP
$$
\end{lemma}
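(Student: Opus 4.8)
The plan is to run a single induction on $n_\cP$, mirroring the strategy of Lemma~\ref{edge count} but tracking the \emph{width} through the three generating operations of Definition~\ref{V definition} rather than the height. The key structural advantage is that the claimed identity $e_\cP = n_\cP + w_\cP - M_\cP - m_\cP$ carries no component term $c_\cP$, so every quantity in it is additive over connected components and the identity is linear in those quantities; consequently the disconnected case becomes essentially free, and I avoid the delicate connected-versus-disconnected bookkeeping that the height version needs. The base case $n_\cP = 1$ is immediate, since then $e_\cP = 0$ and $n_\cP = w_\cP = M_\cP = m_\cP = 1$.

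For the inductive step take a $\mathcal{V}$-poset $\cP$ with $n_\cP \geq 2$ and split into two cases. If $\cP$ is disconnected I would write $\cP = \cP_1 \sqcup \cP_2$ as a disjoint union of two nonempty induced subposets (say one component and its complement), each of which is again a $\mathcal{V}$-poset by Theorem~\ref{V theorem}, since induced $N$-freeness and bowtie-freeness are hereditary under induced subposets. Each of $n$, $e$, $M$, and $m$ is additive over this union, and width is additive as well because an antichain of a disjoint union is precisely a disjoint union of antichains of the two parts, giving $w_\cP = w_{\cP_1} + w_{\cP_2}$. Applying the inductive hypothesis to $\cP_1$ and $\cP_2$ and adding yields the identity for $\cP$.

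If instead $\cP$ is connected then, as already used in Lemma~\ref{edge count}, it has either a unique maximal or a unique minimal element; since a unique maximal element of a finite poset is in fact its greatest element, $\cP$ is obtained from the $\mathcal{V}$-poset $\mathcal{Q} = \cP - x$ by adjoining $x$ as a greatest element (the minimal case being dual). I would then record three facts: (i) in the Hasse diagram $x$ covers exactly the maximal elements of $\mathcal{Q}$, so $e_\cP = e_\mathcal{Q} + M_\mathcal{Q}$; (ii) $n_\cP = n_\mathcal{Q}+1$, $M_\cP = 1$, and $m_\cP = m_\mathcal{Q}$; and (iii) $w_\cP = w_\mathcal{Q}$, because $x$ is comparable to every other element and hence lies in no antichain of size at least $2$, so adjoining it cannot change the size of a maximum antichain. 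Substituting the inductive hypothesis $e_\mathcal{Q} = n_\mathcal{Q} + w_\mathcal{Q} - M_\mathcal{Q} - m_\mathcal{Q}$ into (i) gives $e_\cP = n_\mathcal{Q} + w_\mathcal{Q} - m_\mathcal{Q}$, which by (ii) and (iii) equals $n_\cP + w_\cP - M_\cP - m_\cP$; the least-element case is identical with the roles of $M$ and $m$ exchanged and $e_\cP = e_\mathcal{Q} + m_\mathcal{Q}$.

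The arithmetic is routine, so the only real content — and the step I would be most careful about — is the behavior of the width under the three operations: confirming that width is additive over disjoint unions and, crucially, that adjoining a greatest or least element leaves the width unchanged (this last point relying on the poset being nonempty so that $w \geq 1$). Everything else is a matter of checking that the edge increment matches the corresponding change in $n - M - m$.
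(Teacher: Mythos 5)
Your proof is correct and follows essentially the same route as the paper: induction on $n_\cP$, handling disconnected posets by additivity of all five quantities (including width) over components, and handling connected posets by deleting the unique greatest or least element $x$, using $e_\cP=e_\mathcal{Q}+M_\mathcal{Q}$ and $w_\cP=w_\mathcal{Q}$. The only cosmetic difference is that the paper states the connected case separately and sums over components at the end, whereas you fold the disconnected case into the induction; the substance is identical.
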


\begin{proof}  
As in the previous proof, we show that $e_\cP=n_\cP+w_\cP -M_\cP-m_\cP$ for a connected poset; the equality above follows by summing over components since the width of a disconnected poset is the sum of the width of its connected components (i.e. the length of a maximal antichain).
Observe that if $n_\cP=1$ then $\cP$ is a poset one element and $n_\cP+w_\cP -M_\cP-m_\cP=0=e_\cP$.  Assume towards an inductive argument that $n_\cP\geq 2$.  Since $\cP$ is connected it must have a unique minimal or maximal element, say $x$, which we assume will be maximal (as the argument is identical for a minimal element). Let $\mathcal{Q}=\cP-\{x\}$.  Then, by applying the inductive hypothesis to $\mathcal{Q}$,  
\begin{align*}
e_\cP&=e_\mathcal{Q}+M_\mathcal{Q}= (n_\mathcal{Q}+w_\mathcal{Q} -M_\mathcal{Q}-m_\mathcal{Q})+M_\mathcal{Q} \\
&= n_\mathcal{Q}+w_\mathcal{Q} -m_\mathcal{Q}= n_\mathcal{P}-1+w_\mathcal{P} -m_\mathcal{P}= n_\mathcal{P}-M_\mathcal{P}+w_\mathcal{P} -m_\mathcal{P}.
\end{align*}

\end{proof}

We propose an algorithm for the recognition of autonomous posets that operates on an arbitrary directed acyclic graph whose transitive closure is the poset in question. As a subroutine, we employ an algorithm found in \cite{valdes1979recognition} that detects if a directed acyclic graph contains an induced copy of the pattern $N$ and, if the input is found to be induced $N$-free, it also returns the transitive reduction of the input.  The aforementioned subroutine is guaranteed to run in $O(|V|+|E|)$. Observe that by the proof of Lemma \ref{bowties are not autonomous}, in order to determine if an induced $N$-free poset is a $\mathcal{V}$-poset we need only to verify that its transitive-reduction does not contain a sub-DAG that is isomorphic to $([4],\{(1,3), (2,3), (1,4), (2,4)\})$  (as done in Subroutine $2$) and does not contain sub-DAG whose transitive closure (interpreted as a poset) is isomorphic to $X(n)$, ($n\geq 5$).

Lemma \ref{edge count} shows that if we present the poset by the transitively-reduced directed acyclic graph with cover relations as edges then the run-time is  $O(|V|)$.
Observe that in Subroutines $2$ and $3$ each edge is traversed at most twice,  hence these algorithms have run-time $O(|V|+|E|)$.  Thus the presented algorithm has the same run-time as Subroutine $1$.
\vspace{3mm}

\noindent \textbf{Algorithm 1}
\begin{algorithmic}[1]
\STATE \textbf{input:} a directed acyclic graph $D$. 
\STATE \textbf{output:} $\TRUE$ or $\FALSE$. True if the transitive closure of $D$ is a $\mathcal{V}$-poset, False otherwise.
\STATE Bool $\gets \TRUE$
\IF{IsSeriesParallel(D)[Bool]=False}
\STATE Bool $\gets \FALSE$
\ELSE 
\STATE D $\gets$ IsSeriesParallel(D)[DAG]
\IF{IsBowtieFree(D)$=\FALSE$}
\STATE Bool $\gets\FALSE$
\ELSE
\IF{ClosureIsVPoset$(D)=\FALSE$}
\STATE  Bool $\gets \FALSE$
\ENDIF
\ENDIF
\ENDIF
\RETURN Bool
\end{algorithmic}\vspace{3mm}

\noindent \textbf{Subroutine 1: IsSeriesParallel()}
\begin{algorithmic}[1]
\STATE \textbf{input:} a directed acyclic graph $D$. 
\STATE \textbf{output:} $(\textrm{Bool},DAG)$.  Bool$=\TRUE$ when $D$ has a series-parallel decomposition and Bool$=\FALSE$ otherwise,  and $DAG$ is the transitive reduction of $D$.
\STATE Algorithm found in \cite{valdes1979recognition}
\RETURN $(\textrm{Bool},DAG)$
\end{algorithmic}
\vspace{3mm}

\noindent \textbf{Subroutine 2: IsBowtieFree()}
\begin{algorithmic}[1]
\STATE \textbf{input:} an induced $N$-free, transitively reduced directed acyclic graph $D$. 
\STATE \textbf{output:} $\TRUE$ or $\FALSE$. False if some induced subgraph of $D$ is isomorphic to the bowtie digraph $(\{a,b,c,d\}, \{(a,c), (a,d), (b,c), (b,d)\})$, True otherwise.
\STATE Bool $\gets\TRUE$, Current $\gets\emptyset$, Parents $\gets\emptyset$, Visited $\gets\emptyset$
\FOR{$v\in V(D)$}
\IF{OutDegree$(v)=0$}
\STATE Current.Add($v$)
\ENDIF
\ENDFOR
\WHILE{Current$\neq \emptyset$}
\FOR{$v \in$ Current}
\FOR{$u \in$ InNeighborhood($v$)}
\STATE Parents.Add($u$)
\ENDFOR
\ENDFOR
\FOR{$v \in$ Parents}
\IF{OutDegree$(v)>1$}
\FOR{$u \in$ OutNeighborhood($v$)}
\IF{InDegree$(u)>1$}
\STATE Bool $\gets \FALSE$ (Break {\bf while} loop)
\ENDIF
\STATE Visited.Add($u$)
\ENDFOR
\ENDIF
\ENDFOR
\FOR{$v\in$ Current}
\STATE Visited.Add($v$)
\ENDFOR
\STATE Current $\gets\emptyset$
\FOR{$v \in $Parents}
\IF{$v \notin$ Visited}
\STATE Current.Add($v$)
\STATE Visited.Add($v$)
\ENDIF
\ENDFOR
\STATE Parents $\gets\emptyset$
\ENDWHILE
\RETURN Bool
\end{algorithmic}
\vspace{3mm}

\noindent \textbf{Subroutine 3: ClosureIsVPoset()}
\begin{algorithmic}[1]
\STATE \textbf{input:} an induced $N$-free, induced bowtie-free, transitively reduced directed acyclic graph $D$. 
\STATE \textbf{output:} $\TRUE$ or $\FALSE$. True if the transitive closure of $D$ is a $\mathcal{V}$-poset, False otherwise.
\STATE Bool $\gets\TRUE$, Current $\gets\emptyset$, Parents $\gets\emptyset$, Visited $\gets\emptyset$, Multiple $\gets\emptyset$
\FOR{$v\in V(D)$}
\IF{OutDegree$(v)=0$}
\STATE Current.Add($v$)
\ENDIF
\ENDFOR
\WHILE{Current $\neq \emptyset$}
\FOR{$v \in$ Current}
\IF{$v\in$ Multiple $\AND$ InDegree$(v)>1$}
\STATE Bool $\gets\FALSE$ (Break {\bf while} loop)
\ENDIF
\FOR{$u \in$ InNeighborhood($v$)}
\STATE Parents.Add($u$)
\IF{$v$ in Multiple}
\STATE Multiple.Add($u$)
\ENDIF
\ENDFOR
\ENDFOR
\FOR{$v \in$ Parents}
\IF{OutDegree$(v)>1$}
\STATE Multiple.Add($v$)
\ENDIF
\ENDFOR
\FOR{$v \in$ Current}
\STATE Visited.Add($v$)
\ENDFOR
\STATE Current $\gets\emptyset$
\FOR{$v \in $Parents}
\IF{$v \notin$ Visited}
\STATE Current.Add($v$)
\ENDIF
\ENDFOR
\STATE Parents $\gets\emptyset$
\ENDWHILE
\RETURN Bool
\end{algorithmic}

\end{section}
\begin{section}{Open Questions}
In Lemma \ref{linear extensions are pressing sequences} we demonstrate that the successful pressing sequences of an OSP-graph are the linear extensions of a set of posets that arise from the instructional Cholesky roots of the graph.  An autonomous graph has the property that its successful pressing sequences are all linear extensions of a single poset.  In particular, in the autonomous case, this poset can be viewed as the intersection of all of the successful pressing sequences of the graph (interpreted as linear extensions).  In the case that the OSP-graph is not autonomous then the posets are the intersections of  pairwise disjoint families of successful pressing sequences.  
Thus, we have that if $G$ is an OSP-graph then the instructional posets of $G$ partition $\Sigma(G)$  into disjoint sets $S_1, S_2, \ldots, S_k$ satisfying that $\linext\left(\bigcap S_i \right)=S_i$ for each $i \in [k]$. Observe that this partition is not sufficient to determine the instructional posets of a graph since, for example $\linext\left(\bigcap \{\sigma\} \right)=\{\sigma\}$. 

\begin{question}
In general, how many distinct partitions of $\Sigma(G)$ into disjoint sets $\{S_i\}_i$ exist such that $\linext\left(\bigcap S_i \right)=S_i$ for each $i$?
\end{question}

The present work arose in the context of studying the complexity of enumeration of pressing sequences.  While every poset has a graph $G$ that generates it, only for the autonomous posets $\mathcal{P}$ does there exist a $G$ for which $\mathcal{P}$ is unaccompanied by other posets in $\mathfrak{S}(G)$. As we have shown that the autonomous posets are a subset of the series-parallel posets, this means that demonstrating \#P-hardness of counting pressing sequences or efficient sampling asymptotically uniformly at random from all pressing sequences of graph cannot be derived directly from results on the complexity of linear extension enumeration (see \cite{brightwell1991counting}).

\begin{conjecture} Exactly counting pressing sequences of a graph is \#P-hard.
\end{conjecture}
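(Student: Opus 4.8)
The plan is to reduce a known \#P-hard counting problem to the evaluation of $|\Sigma(G)|$, starting from the two reformulations of the count that this paper already supplies. By Lemma~\ref{pressifLPN}, for a full-rank OSP-graph $G$ on $n$ vertices with adjacency matrix $A$,
$$|\Sigma(G)| = \#\{\sigma \in S_n : \text{every leading principal minor of } P_\sigma^T A P_\sigma \text{ is nonzero over } \mathbb{F}_2\},$$
so $|\Sigma(G)|$ counts the orderings of $V(G)$ under which symmetric Gaussian elimination of $A$ proceeds without a pivot swap; equivalently it counts the maximal chains $\emptyset = S_0 \subset S_1 \subset \cdots \subset S_n = [n]$ in which each principal submatrix $A[S_k]$ is nonsingular (the feasible sets of the representable $\mathbb{F}_2$ delta-matroid of $A$). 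By Lemma~\ref{linear extensions are pressing sequences} this same quantity equals $\sum_{\cP \in \mathfrak{S}(G)} |\linext(\cP)|$. I would attempt the reduction from counting linear extensions, which is \#P-hard in general \cite{brightwell1991counting}, using an explicit generator: given a target poset $\cP$ (labeled by a linear extension), take $U$ to be the upper-triangular zeta-type matrix with $U[i,j]=1$ exactly when $i \succeq_\cP j$, and let $G$ be the graph with adjacency matrix $U^TU$, so that $\cP \in \mathfrak{S}(G)$.

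The essential difficulty, and the reason the statement remains a conjecture, is visible here: our main theorem shows $\mathfrak{S}(G)=\{\cP\}$ only when $\cP$ is a $\mathcal{V}$-poset, and $\mathcal{V}$-posets are series-parallel, whose linear extensions are countable in polynomial time. Thus for any hard (non-series-parallel) $\cP$, the generator $G$ necessarily produces \emph{parasitic} posets $\cP' \neq \cP$, and $|\Sigma(G)| = |\linext(\cP)| + \sum_{\cP' \neq \cP} |\linext(\cP')|$ rather than $|\linext(\cP)|$ on the nose. Worse, every $\cP' \in \mathfrak{S}(G)$ lives on the same ground set $[n]$ and hence has the same cardinality, so the usual trick of attaching a uniform scaling gadget (e.g.\ a disjoint chain, which multiplies every term by a common binomial factor $\binom{n+t}{t}$) rescales the whole sum uniformly and cannot separate the target term from the parasites. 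Any successful reduction must therefore either (i) characterize $\mathfrak{S}(G)$ explicitly and subtract the parasitic contributions in polynomial time, or (ii) employ a gadget whose effect on each term depends on the \emph{shape} of $\cP'$ (for instance on its numbers of minimal and maximal elements $m_{\cP'}, M_{\cP'}$, which the edge counts of Lemmas~\ref{edge count} and \ref{width count} identify as the relevant structural parameters), so that a polynomial-size family of graphs $G_0, \dots, G_m$ yields an invertible interpolation system recovering $|\linext(\cP)|$.

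I expect step (ii) to be the crux. The plan is to design a family of append-gadgets, built from the $\oplus$ operations of Section 2 and whose effect on a single poset is controlled via Observations~\ref{left append observation} and \ref{right append observation}, that act on each instructional poset $\cP'$ by a factor determined by a bounded set of invariants of $\cP'$, and then to prove that the resulting linear system (a generalized Vandermonde in those invariants) is nonsingular over $\mathbb{Z}$, so that an oracle for $|\Sigma(\cdot)|$ on the family determines the target count. The main technical obstacles are controlling which parasitic posets a gadget introduces (so that the per-term factors remain functions of a bounded number of invariants) and establishing the nonsingularity of the interpolation matrix. An appealing alternative, bypassing the poset bookkeeping entirely, is to work in the delta-matroid / local-complementation picture and realize a known \#P-hard evaluation of the interlace polynomial as a quantity extractable from the saturated-chain counts $|\Sigma(\cdot)|$ of a gadget family; this trades the poset-isolation problem for that of expressing chain counts through interlace-type evaluations, which I regard as the most promising route.
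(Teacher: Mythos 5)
This statement is a \emph{conjecture} in the paper: the authors give no proof, and indeed they explicitly note that hardness cannot be inherited directly from Brightwell--Winkler because autonomous posets are series-parallel, a point your proposal correctly rediscovers. So the question is only whether your text constitutes a proof, and it does not: it is a research program whose two load-bearing steps are left entirely unexecuted. Concretely, step (ii) requires (a) a gadget family whose multiplicative effect on each instructional poset $\cP' \in \mathfrak{S}(G)$ is a function of a \emph{bounded} list of invariants of $\cP'$, and (b) nonsingularity of the resulting interpolation system. Neither is established, and (a) faces a structural obstruction you do not address: for a non-autonomous generator $G$ nothing in the paper controls $\mathfrak{S}(G)$ at all --- the paper's own Question 2 leaves open whether $|\mathfrak{S}(G)|$ can be superpolynomial in $n$ --- so you have no a priori reason that the parasitic terms aggregate into polynomially many interpolation classes, nor that the $\oplus$ gadgets (whose analysis in Observations \ref{left append observation} and \ref{right append observation} applies to appending a \emph{single} extremal vertex, and whose autonomy-preservation lemmas assume $G$ is already autonomous) act on each parasite by a clean per-term factor. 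Appending $x$ to a non-autonomous $G$ can change which initial presses are legal (compare the delicate component-counting argument inside the proof that $G \oplus x$ preserves autonomy), so the claim that a gadget multiplies each $|\linext(\cP')|$ by a shape-determined factor is not a bookkeeping detail but the entire difficulty.

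Two smaller points. First, your preliminary observations are sound and consistent with the paper: the identity $|\Sigma(G)| = \sum_{\cP \in \mathfrak{S}(G)} |\linext(\cP)|$ is exactly Lemma \ref{linear extensions are pressing sequences}, the LPN reformulation is Lemma \ref{pressifLPN}, and your remark that a disjoint-chain gadget scales every term by the same $\binom{n+t}{t}$ (because all posets in $\mathfrak{S}(G)$ share the ground set $[n]$) is a correct and worthwhile observation about why the naive reduction fails. Second, your proposed zeta-type generator needs justification even at the level of membership: you assert $\cP \in \mathfrak{S}(G)$ for $U$ the full order matrix of $\cP$, but $\poset(G,\sigma)$ is the transitive closure of the digraph of $U$, which recovers $\cP$ only after checking that this $U$ is the \emph{instructional} Cholesky root of $U^T U$ under the chosen ordering --- true here by uniqueness of full-rank Cholesky roots, but it should be said. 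The interlace-polynomial alternative in your final sentence is, as you concede, speculative. In sum: no error in what is argued, but the statement remains exactly as open after your text as before it, which is consistent with the paper's decision to present it as a conjecture.
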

If exact counting is not possible then exhibiting an FPRAS would be desirable -- and is often possible for problems which are \#P-hard.  In the case that the number of posets generated by an OSP-graph is small (say, polynomial in the number of vertices), then it may be possible to adapt an  FPRAS for sampling linear extensions  (see \cite{jerrumsinclair1989}).

\begin{question}
Does there exists a constant $c$ such that $|\mathfrak{S}(G)|=O(n^c)$ for all graphs $G$ on $n$ vertices?
\end{question}
  
\begin{question}
Is there an FPRAS for counting the number of pressing sequences of a graph?
\end{question}

\end{section}

\newpage

\bibliographystyle{alpha}
\bibliography{bibliography}
\end{document}